\theoremstyle{plain}
\newtheorem{lem}{Lemma}[section]
\newtheorem{theo}[lem]{Theorem}
\newtheorem{prop}[lem]{Proposition}
  \newcommand {\C}{{\mathbb C}}
  \newcommand {\bH}{{\mathbb H}}
  \newcommand {\N}{{\mathbb N}}
  \newcommand {\R}{{\mathbb R}}
  \newcommand {\af}{{\mathfrak a}}
  \newcommand {\gf}{{\mathfrak g}}
  \newcommand {\kf}{{\mathfrak k}}
  \newcommand {\tf}{{\mathfrak t}}
  \newcommand {\hf}{{\mathfrak h}}
  \newcommand {\nf}{{\mathfrak n}}
  \newcommand {\pg}{{\mathfrak p}}
\renewcommand {\H}{{\mathcal H}}
 \newcommand {\cS}{{\mathcal S}}
\newcommand {\bs}{\backslash}
\renewcommand{\Im}{\operatorname{Im}}
\renewcommand{\Re}{\operatorname{Re}}
\newcommand{\Tr}{\operatorname{Tr}}
\newcommand{\ad}{\operatorname{ad}}
\newcommand{\End}{\operatorname{End}}
\newcommand{\rank}{\operatorname{rank}}
\newcommand{\tr}{\operatorname{tr}}
\newcommand{\Id}{\operatorname{Id}}
\newcommand{\Hom}{\operatorname{Hom}}
\newcommand{\I}{\operatorname{I}}
\newcommand{\rk}{\operatorname{rank}}
\newcommand{\rkk}{\operatorname{rk}}
\newcommand{\vol}{\operatorname{vol}}
\newcommand{\SL}{\operatorname{SL}}
\newcommand{\GL}{\operatorname{GL}}
\newcommand{\Ad}{\operatorname{Ad}}
\newcommand{\supp}{\operatorname{supp}}
\newcommand{\spec}{\operatorname{spec}}
\newcommand{\rP}{P^{1/2}}
\newcommand{\rD}{\Delta^{1/2}_E}
\newcommand{\rep}{\eta}
\begin{document}
\title[]
{The Selberg trace formula for non-unitary representations of the lattice}
\date{\today}

\author{Werner M\"uller}
\address{Universit\"at Bonn\\
Mathematisches Institut\\
Beringstrasse 1\\
D -- 53115 Bonn, Germany}
\email{mueller@math.uni-bonn.de}
\keywords{Selberg trace formula, spectral theory}
\subjclass{Primary: 11F72}

\begin{abstract}
Let $X=\Gamma\bs G/K$ be a compact locally symmetric space. In this paper we
establish  a version of the Selberg trace formula for non-unitary 
representations of the lattice $\Gamma$. On the spectral side appears the 
spectrum of the ``flat Laplacian'' $\Delta^\#$, acting in the space of 
sections of the associated flat bundle. In general, this is a non-self-adjoint
operator. 
\end{abstract}

\maketitle

\section{Introduction}
\setcounter{equation}{0}
Let $G$ be a connected real semisimple Lie group with finite center and of 
non-compact type. Let $K$ be a maximal compact subgroup of $G$. Then $S=G/K$
is a Riemannian symmetric space of nonpositive curvature. We fix an invariant 
metric on $S$ which we normalize using the Killing form. Let $\Gamma\subset G$
be a discrete subgroup such that $\Gamma\bs G$ is compact. For simplicity we
assume that $\Gamma$ is torsion free. Then $\Gamma$ acts properly 
discontinuously on $S$ and $X=\Gamma\bs S$ is a compact locally symmetric
manifold. 

Let $\chi\colon\Gamma\to\GL(V_\chi)$ be a finite-dimensional unitary
representation. Denote by $E_\chi\to\Gamma\bs S$ the associated flat vector
bundle. It is equipped with a canonical Hermitian fiber metric $h^\chi$ and
a compatible flat connection $\nabla^\chi$. Let $d_\chi\colon C^\infty(X,E_\chi)
\to \Lambda^1(X,E_\chi)$ be the associated exterior derivative and let 
$\delta_\chi$ be the formal adjoint of $d_\chi$ with respect to the inner
products in $C^\infty(X,E_\chi)$ and 
$C^\infty(X,T^*(X)\otimes E_\chi)$, respectively,  induced
by the invariant metric on $S$ and the fiber metric $h^\chi$ in $E_\chi$.
Let $\Delta_\chi=\delta_\chi d_\chi$ be the associated Laplace operator. 
Then $\Delta_\chi$ is a second order elliptic, formally self-adjoint,
 nonnegative
differential operator. The Selberg trace formula computes the distributional
trace of $\cos t\sqrt{\Delta_\chi}$ in terms of a sum of distributions on $G$, 
which are associated to  the conjugacy classes of $\Gamma$. 

The trace formula has many applications. Of  particular interest for the 
present paper are applications to Ruelle and Selberg zeta functions.
Especially the analytic continuation and the
functional equation of twisted Ruelle and Selberg zeta functions rely on the
twisted Selberg trace formula \cite{BO}, \cite{Se2}. 
Also spectral invariants of 
locally symmetric spaces such as analytic torsion and eta invariants can
be studied with the help of the trace formula (see \cite{Fr}, \cite{Mil}, 
\cite{MS1}, \cite{MS2}). So far, these applications
are restricted to unitary representations of $\Gamma$ and it this very
desirable to extend the scope of the trace formula so that all
finite-dimensional representations are covered.  
This is the main goal of this paper.


To begin with we recall the trace formula for a unitary representation $\chi$
(see \cite{Se1}, \cite{Se2}).
Let $\spec(\Delta_\chi)$ be the spectrum of $\Delta_\chi$. It consists of a
sequence $0\le\lambda_1<\lambda_2<\cdots$ of eigenvalues of finite 
multiplicities. Denote by $m(\lambda_k)$ the multiplicity of $\lambda_k$.
Let $\varphi\in\cS(\R)$ be even and assume that the Fourier transform 
$\hat\varphi$ of $\varphi$ belongs to $C^\infty_c(\R)$. 
Then $\varphi((\Delta_\chi)^{1/2})$ is a trace class operator and its trace is 
given by
\begin{equation}\label{traceequ}
\Tr\varphi((\Delta_\chi)^{1/2})=\sum_{\lambda\in\spec(\Delta_\chi)}
m(\lambda)\varphi(\lambda^{1/2}).
\end{equation}
Let $\tilde\Delta$ be the Laplacian of $S$, and let $h_\varphi$ be the kernel 
of the invariant integral operator $\varphi(\tilde\Delta^{1/2})$. It belongs to 
the space $C^\infty_c(G/\hskip-2pt/K)$ of $K$-bi-invariant compactly supported 
smooth functions on $G$. 
Given $\gamma\in\Gamma$ let $\{\gamma\}_\Gamma$ denote its $\Gamma$-conjugacy
class. Furthermore, let $G_\gamma$ and $\Gamma_\gamma$ denote the centralizer of
$\gamma$ in $G$ and $\Gamma$, respectively. Then the first version of the 
trace formula is the following identity.
\begin{equation}\label{trace0}
\begin{split}
\sum_{\lambda\in\spec(\Delta_\chi)}m(\lambda)\varphi(\lambda^{1/2})
&=\vol(\Gamma\bs S)\dim V_\chi h_\varphi(e)\\
&+\sum_{\{\gamma\}_\Gamma\not=e}\tr\chi(\gamma)\vol(\Gamma_\gamma\bs G_\gamma)
\int_{G_\gamma\bs G}h_\varphi(g^{-1}\gamma g)\;d\dot g.
\end{split}
\end{equation}
To make this formula more explicite, one can use the Plancherel formula to
express $h_\varphi$ in terms of $\varphi$. Furthermore, the orbital integrals
\[
I(g;\varphi)=\int_{G_\gamma\bs G}h_\varphi(g^{-1}\gamma g)\;d\dot g
\]
are invariant distributions and therefore, one can use Harish-Chandra's Fourier
inversion formula to compute them (see \cite[\S 4]{DKV}). In the
higher rank case this is rather complicated and no closed formula is available.
In the rank one case, however, the situation is much better. There is a
simple formula expressing the orbital integrals in terms of characters which
leads to an explicit form of the trace formula \cite[Theorem 6.7]{Wa}.

To extend the Selberg trace formula to all
finite-dimensional representations of $\Gamma$, we first note that the sum on 
the right hand side of (\ref{trace0}) is 
finite and therefore, it is well defined for all finite-dimensional 
representations $\chi$. The question is what is the appropriate operator which
replaces the Laplacian on the
left hand side. In general there is no Hermitian metric on $E_\chi$ 
which is compatible with the flat connection $\nabla^\chi$. 
A special case has been studied by Fay 
\cite{Fa}. He considered the analytic torsion $T_M(\chi)$ of a Riemann
surface $M=\Gamma\bs \bH$ of genus $g>1$ and a unitary character $\chi\in
\Hom(\Gamma,S^1)$ and established the analytic continuation of $T_M(\chi)$ 
to all characters $\chi\in\Hom(\Gamma,\C^*)$. To this end he introduced a
non-self-adjoint Laplacian. We use a similar approach in the general case. 
The operator that replaces $\Delta_\chi$
is the ``flat Laplacian'' $\Delta_\chi^\#$ which is defined as follows. Let
$\ast\colon \Lambda^p(T^\ast X)\to \Lambda^{n-p}(T^\ast X)$
be the Hodge star operator associated to the Riemannian metric of $X$. 
Extend $\ast$ to an operator  $\ast_\chi$ in $\Lambda^p(T^\ast X)\otimes E_\chi$
 by $\ast_\chi=\ast\otimes \Id_{E_\chi}$. Define
$\delta^\#_\chi:=(-1)^{n+1}\ast_\chi d_\chi\ast_\chi.$
Then the flat Laplacian $\Delta^\#$ is defined as
\[\Delta^\#_\chi=\delta^\#_\chi d_\chi.\]
If $\chi$ is unitary, $\Delta_\chi^\#$ equals $\Delta_\chi$. For an arbitrary
$\chi$ 
we pick any Hermitian fiber metric in $E_\chi$ and use it together with the
Riemannian metric on $X$ to introduce an inner product in 
$C^\infty(X, E_\chi)$. 
In general, $\Delta_\chi^\#$ is a not self-adjoint w.r.t. this inner product.
However, if we define the corresponding 
Laplace operator $\Delta_{\chi}$ as above by $\delta_\chi d_\chi$, where 
the formal adjoint $\delta_\chi$ is taken w.r.t. to the inner product, 
then $\Delta_\chi^\#$ has the
same principal symbol as $\Delta_{\chi}$. This implies that the operator
$\Delta_\chi^\#$ has nice spectral properties. Its spectrum  is discrete and 
contained in a positive cone $C\subset \C$ with $\R^+\subset C$ 
(see \cite{Sh}). 
It follows  that an Agmon angle $\theta$ exists for 
$\Delta_\chi^\#$ and we can define 
$\varphi\left((\Delta_\chi^\#\right)^{1/2}_\theta)$ by the usual functional 
calculus \cite{Sh}. This is a trace class operator. 
Since $\varphi$ is assumed to be even, 
$\varphi\left((\Delta_\chi^\#\right)^{1/2}_\theta)$ is independent of $\theta$
and we can delete $\theta$ from the notation. Lidkii's theorem 
\cite[Theorem 8.4]{GK} generalizes (\ref{traceequ}). As mentioned above, the 
spectrum $\spec(\Delta_\chi^\#)$ of $\Delta_\chi^\#$ is 
discrete and consists of eigenvalues only. For 
$\lambda\in\spec(\Delta_\chi^\#)$ let $m(\lambda)$ denote
the algebraic multiplicity of $\lambda$, i.e., $m(\lambda)$ is the  dimension 
of the root space which consists of all $f\in C^\infty(X,E_\chi)$ such that there
is $N\in\N$ with $(\Delta^\#_\chi-\lambda\I)^Nf=0$.  
Then by Lidskii's theorem we have
\begin{equation}\label{traceequ1}
\Tr\varphi((\Delta_\chi^\#)^{1/2})=\sum_{\lambda\in\spec(\Delta_\chi^\#)}
m(\lambda)\varphi(\lambda^{1/2}).
\end{equation}
The first version of our trace formula 
generalizes (\ref{trace0}) with 
$\Tr \varphi\left((\Delta_\chi^\#\right)^{1/2})$
on the left hand side. 

Actually, we prove a more general result. Let $\tau$ be an irreducible 
representation of $K$ and $E_\tau\to\Gamma\bs S$ the associated
locally homogeneous vector bundle, equipped with its canonical
invariant connection $\nabla^\tau$. Let $\nabla=\nabla^{\tau,\chi}$ be the 
product connection in $E_\tau\otimes E_\chi$, and let $\Delta_{\tau,\chi}^\#=
-\Tr(\nabla^2)$ be the corresponding connection Laplacian. Then for $\varphi$
as above $\varphi\left((\Delta_{\tau,\chi}^\#)^{1/2}\right)$ is a trace class
operator and we establish a trace formula for this operator 
which is similar to the scalar case.

If $G$ has split rank one, we get an explicite version of the trace formula.
To describe it we need to introduce some notation.
Let $G=KAN$ be an Iwasawa decomposition of $G$. Then $\dim A=1$. Let $\af$ be
the Lie algebra of $A$. The restriction of the Killing form to $\af^*$ 
defines an inner product on $\af^*$. Let $|\rho|$ denote the norm  of the 
half-sum $\rho$ of positive roots of $(G,A)$. Let
 $\gamma\in\Gamma\setminus\{e\}$. Then there is a unique closed geodesic 
$\tau_\gamma$ that corresponds to the $\Gamma$-conjugacy class 
$\{\gamma\}_\Gamma$ of $\gamma$.
 Denote by $l(\gamma)$ the length of $\tau_\gamma$. 
Furthermore, let $\gamma_0\in \Gamma$ be the unique primitive element such that
$\gamma=\gamma_0^k$ for some $k\in\N$.  Finally let $D(\gamma)$ be the 
discriminant of $\gamma$ (see (\ref{discr}) for its definition). 
Let $\beta(\lambda) d\lambda$ be the Plancherel measure for
spherical functions on $G$ \cite{He}. We can now state our main result
in the scalar case  which is the following theorem.

\begin{theo}\label{scalar}
For all even $\varphi\in\cS(\R)$ with $\widehat\varphi\in C_c^\infty(\R)$ we
have
\begin{equation}\label{traceform5}
\begin{split}
\sum_{\lambda\in\spec\left(\Delta_\chi^\#\right)}m(\lambda)
 \varphi\left((\lambda-|\rho|^2)^{1/2}\right)=&
\dim(V_\chi)\frac{\vol(\Gamma\bs S)}{2}\int_{\R} 
\varphi(\lambda)\beta(\lambda)\;d\lambda\\
&+\sum_{\{\gamma\}_\Gamma\not=e}\tr\chi(\gamma)
\frac{l(\gamma_0)}{D(\gamma)}\widehat\varphi(l(\gamma)).
\end{split}
\end{equation}
\end{theo}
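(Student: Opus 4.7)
The plan is to deduce Theorem \ref{scalar} from the general (non-explicit) version of the trace formula for $\Delta_\chi^\#$ that is established earlier in the paper, by specializing its test function to the rank-one setting. Given an even $\varphi\in\cS(\R)$ with $\widehat\varphi\in C_c^\infty(\R)$, let $h_\varphi\in C_c^\infty(G/\hskip-2pt/K)$ be the $K$-biinvariant function whose Harish-Chandra spherical transform is $\varphi$ itself, in the variable $r\in\R$ parametrizing the spherical principal series. The Paley-Wiener theorem for the spherical transform guarantees that $h_\varphi$ exists and is compactly supported, so the sum over $\{\gamma\}_\Gamma\ne e$ on the right of the general trace formula is finite. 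Since the spherical parameter $r$ corresponds to Laplace eigenvalue $r^2+|\rho|^2$, the convolution operator induced by $h_\varphi$ on $C^\infty(X,E_\chi)$ is $\varphi\bigl((\Delta_\chi^\#-|\rho|^2)^{1/2}\bigr)$. By Lidskii's theorem (cf.\ the discussion around (\ref{traceequ1})), its trace equals $\sum m(\lambda)\,\varphi((\lambda-|\rho|^2)^{1/2})$, which is the left-hand side of (\ref{traceform5}).

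It remains to identify the two pieces on the geometric side. For the identity contribution, the rank-one Plancherel inversion formula for the spherical transform yields
\[h_\varphi(e)\;=\;\tfrac{1}{2}\int_\R\varphi(\lambda)\,\beta(\lambda)\,d\lambda,\]
which, multiplied by $\vol(\Gamma\bs S)\dim V_\chi$, produces the first term on the right of (\ref{traceform5}). For the hyperbolic contributions, since $\Gamma$ is torsion-free and cocompact and $G$ has split rank one, every $\gamma\ne e$ is conjugate into $H=MA$ with $G_\gamma=H$, and the translation distance along the corresponding closed geodesic $\tau_\gamma$ equals $l(\gamma)$. A standard change of variables using the Iwasawa decomposition $G=NAK$ reduces
\[\int_{G_\gamma\bs G}h_\varphi(g^{-1}\gamma g)\,d\dot g\;=\;\frac{1}{D(\gamma)}\int_N h_\varphi(n^{-1}\gamma n)\,dn,\]
the Jacobian of the map $n\mapsto n^{-1}\gamma n$ being precisely $D(\gamma)$. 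The right-hand integral equals the value at $\gamma$ of the Abel (horocyclic) transform $\cA h_\varphi$; since $\cA$ intertwines the spherical Fourier transform with the Euclidean Fourier transform on $A\cong\R$, one has $\cA h_\varphi(\gamma)=\widehat\varphi(l(\gamma))$. Combined with $\vol(\Gamma_\gamma\bs G_\gamma)=l(\gamma_0)$, which holds because $\Gamma_\gamma$ is infinite cyclic generated by the primitive $\gamma_0$ translating $A$ by $l(\gamma_0)$ (and $\vol(M)$ is normalized to one), this yields the hyperbolic contribution on the right of (\ref{traceform5}).

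The genuinely new content of the theorem compared with the classical unitary case lies entirely on the spectral side: non-self-adjointness of $\Delta_\chi^\#$ is handled by the algebraic multiplicities supplied by Lidskii's theorem, while the geometric side depends on $\chi$ only through the trivially well-defined quantity $\tr\chi(\gamma)$. Consequently the rank-one explicit identifications above are the same as in the unitary case and can be imported from \cite{Wa}. The step requiring the most attention is the careful bookkeeping of measure normalizations through the Iwasawa reduction of the orbital integral and the definition (\ref{discr}) of $D(\gamma)$, so that the factors $\tfrac{1}{2}$, $l(\gamma_0)$ and $D(\gamma)$ come out with exactly the values claimed in (\ref{traceform5}); this is the principal technical, if not conceptual, obstacle.
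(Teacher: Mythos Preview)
Your argument is correct, but on the geometric side you take a different route from the paper. The paper does not compute the orbital integrals by the Abel transform. Instead it quotes Wallach's character formula (\ref{orbint1}), which expresses each orbital integral as a finite sum over $\sigma\in\widehat M$ of Fourier integrals of the principal-series characters $\Theta_{\sigma,\lambda}(\tr h_\varphi)$, and then spends most of Section~\ref{rank1} (Lemma~\ref{spherical2} through Proposition~\ref{charact3}, culminating in (\ref{charact4})) computing these characters by applying the wave equation to the $\tau$-spherical functions $\Phi^\pi_\tau$. For $\tau=1$ only $\sigma=1$ survives and (\ref{charact6}) yields $\Theta_{1,\lambda}(h_\varphi)=\varphi\bigl((\lambda^2+|\rho|^2)^{1/2}\bigr)$, whose Euclidean Fourier transform then gives the hyperbolic term. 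Your horocycle/Abel-transform reduction bypasses the character machinery entirely; it is the classical rank-one computation and is shorter for the trivial $K$-type, whereas the paper's representation-theoretic route is set up so that it handles all $\tau\in\widehat K$ uniformly, which is why the paper invests in Proposition~\ref{charact3}.

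Two small caveats. First, your claim $G_\gamma=MA$ is not literally correct: the centraliser of $m_\gamma a_\gamma$ in $G$ is $M_{m_\gamma}\cdot A$, which is generally a proper subgroup of $MA$; this is exactly the factor $u(\gamma)=\vol(G_{m_\gamma a_\gamma}/A)$ appearing in (\ref{orbint1}) and (\ref{volume}). For $K$-biinvariant $h_\varphi$ the integrand descends to $MA\bs G$ anyway, so the discrepancy is absorbed into the volume bookkeeping you already flagged, but it is worth stating precisely. Second, Proposition~\ref{traceform} is formulated for $\varphi\bigl((\Delta_{\tau,\chi}^\#)^{1/2}\bigr)$ with $h_\varphi$ the kernel of $\varphi(\widetilde\Delta_\tau^{1/2})$, not for an arbitrary $h\in C_c^\infty(G/\hskip-2pt/K)$; to use it as you do one should observe either that the whole framework of Sections~\ref{funccal}--\ref{locsym} applies verbatim to $\Delta_\chi^\#-|\rho|^2$, or that the shifted test function $\psi(\mu)=\varphi\bigl((\mu^2-|\rho|^2)^{1/2}\bigr)$ again has $\widehat\psi\in C_c^\infty(\R)$ (it does, by Paley--Wiener). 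The paper itself makes this shift in one sentence at the end of Section~\ref{rank1}.
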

Note that for every $c>0$ there are only finitely many conjugacy classes
$\{\gamma\}_\Gamma$ with $l(\gamma)\le c$. Therefore the sum on the right hand 
side is finite.

To describe our method we restrict attention to the scalar case, i.e, we
consider the operator $\Delta_\chi^\#$. 
Our method  is based on the approach of 
Bunke and Olbrich \cite{BO} to the Selberg trace formula in the unitary case.
We consider the wave equation
\begin{equation}\label{wave1}
\left(\frac{\partial^2}{\partial t^2}+\Delta_\chi^\#\right)u(t)=0,\quad
u(0)=f,\; u_t(0)=0,
\end{equation}
for any initial conditions $f\in C^\infty(X,E_\chi)$. 
Since the principal symbols 
of $\Delta_{\chi^\#}$ is given by $\sigma(x,\xi)=\parallel\xi\parallel^2
\Id_{E_x}$, the
operator $L=\frac{\partial^2}{\partial t^2}+\Delta_\chi^\#$ is strictly
hyperbolic in the sense of \cite[Chapt. IV, \S3]{Ta1}. Therefore
(\ref{discr}) has a unique solution $u(t;f)$. Let $\varphi\in\cS(\R)$ be even 
such that 
$\hat\varphi\in C^\infty_c(\R)$. Then it follows that 
\begin{equation}\label{intoper}
\varphi\left((\Delta_\chi^\#)^{1/2}\right)f=\frac{1}{\sqrt{2\pi}}
\int_\R\hat\varphi(t)u(t;f)\ dt.
\end{equation}
Let $\tilde u(t;f)$ and $\tilde f$ denote the lift of $u(t,f)$ and $f$,
respectively, to the
universal covering $S$ of $X$. Then the corresponding wave equation on $S$
with initial conditions $u(0)=\tilde f$, $u_t(0)=0$ 
is also strictly hyperbolic and by finite propagation speed it follows that it
has a unique solution $u(t;\tilde f)$. Thus we obtain
 $\tilde u(t,\tilde x;f)=u(t,\tilde x;\tilde f)$.  Since the lift of $E_\chi$
to $S$ is trivial, the lifted operator $\tilde\Delta^\#_\chi$ takes the form
$\tilde\Delta^\#_\chi=\tilde\Delta\otimes \Id_{V_\chi}$, where $\tilde\Delta$
is the Laplace operator on  $S$. 
Let $h_\varphi\in C_c^\infty(G//K)$ be the kernel of the $G$-invariant integral 
operator $\varphi\left(\tilde\Delta^{1/2}\right)$.
Then it follows that the kernel $K_\varphi(x,y)$
of $\varphi\left((\Delta_\chi^\#)^{1/2}\right)$ is given by
\begin{equation}\label{kernel5}
K_\varphi(x,y)=\sum_{\gamma\in\Gamma} h_\varphi(g_1^{-1}\gamma g_2)\chi(\gamma),
\end{equation}
where $x=\Gamma g_1K$ and $y=\Gamma g_2 K$. One can now proceed in the same 
way as in the case of a unitary representation $\chi$ and derive the twisted 
Selberg trace formula.

Besides unitary representations of $\Gamma$,  there is a second class of
representations of $\Gamma$ for which the usual trace formula can be applied.
These are representations which are the restriction to $\Gamma$ of a
finite-dimensional representation $\rep\colon G\to\GL(E)$. Let $E_\rep\to X$
be the flat vector bundle associated to $\rep|_\Gamma$. Then $E_\rep$ is
canonically isomorphic to the locally homogeneous vector bundle $E_\tau$ 
associated to the principal $K$-bundle $\Gamma\bs G\to X$ via the 
representation $\tau=\rep|_K$. The bundle carries a canonical Hermitian fiber 
metric
and the Laplacian in $C^\infty(X,E_\rep)$ with respect to this metric is closely
related to the Casimir operator acting in $C^\infty(X,E_\tau)$. This brings us
back to the usual framework of the Selberg trace formula for locally 
homogeneous vector bundles. Details will be discussed in section 
\ref{restrict}.

The paper is organized as follows. In section \ref{funccal} we collect a number
of facts about spectral theory of elliptic operators with leading symbol of
Laplace type and we develop some functional calculus for such operators.
The kernels of the associated integral operators are studied in section 
\ref{kernwave}.  Especially, we prove (\ref{intoper}) and 
(\ref{kernel5}). In section \ref{bochlaplace} we apply these results to the 
case of twisted Bochner-Laplace operators. In section \ref{locsym}
we turn to the locally symmetric case 
and we prove the first version of the trace formula which is Theorem 
\ref{traceform}. In  section \ref{rank1} we specialize to
the case where $G$ has
split rank one and we prove Theorem \ref{scalar}. In the final section 
\ref{restrict} we are concerned with representations of $\Gamma$ which are 
the restriction of a representation of $G$.

\section{Functional calculus}\label{funccal}
\setcounter{equation}{0}

In this section we develop the necessary facts of the functional calculus we 
are going to use in this paper. 

Let $X$ be a closed Riemannian manifold of dimension $n$ and $E\to X$ a 
Hermitian vector bundle 
over $X$. We denote by $C^\infty(X,E)$ the space of smooth sections of $E$,
and by $L^2(X,E)$ the space of $L^2$-sections of $E$ w.r.t. the metrics on
$X$ and $E$. Let
\[
P\colon C^\infty(X,E)\to C^\infty(X,E)
\]
be an elliptic differential operator of order 2 with leading symbol
\begin{equation}\label{symbol1}
\sigma(P)(x,\xi)=\parallel\xi\parallel^2_x\cdot\Id_{E_x}.
\end{equation}
For $I\subset[0,2\pi]$ let 
\[
\Lambda_I=\bigl\{re^{i\theta}\colon 0\le r<\infty,\; \theta\in I\}.
\]
be the solid angle attached to $I$. The following lemma describes the 
structure of the spectrum of $P$. 
\begin{lem}\label{spec1}
For every $ 0<\varepsilon<\pi/2$ there exists $R>0$ such that the spectrum of
$P$ is contained in the set $B_R(0)\cup \Lambda_{[-\varepsilon,\varepsilon]}$. 
Moreover the spectrum of $P$ is discrete.
\end{lem}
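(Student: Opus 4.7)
The plan is to reduce both assertions to the parameter-dependent elliptic calculus of \cite{Sh}. The crucial observation is that $\sigma(P)(x,\xi)=\|\xi\|^2\Id_{E_x}$ takes values in $\R_{\geq 0}$, so for any $\lambda=re^{i\theta}$ with $\varepsilon\leq|\theta|\leq\pi$ one has the pointwise estimate
\[
|\sigma(P)(x,\xi)-\lambda|\geq c_\varepsilon\bigl(\|\xi\|^2+|\lambda|\bigr),
\]
which follows from elementary planar geometry (treating separately the subsectors where $|\Im\lambda|$ dominates and the one close to $\R_{<0}$, where $\|\xi\|^2-\Re\lambda$ is manifestly large). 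This is the parameter-ellipticity condition for $P$ on the closed sector $\Lambda_{[\varepsilon,2\pi-\varepsilon]}$, and every such ray is a ray of minimal growth for $P$.

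For discreteness I would first establish, via a G{\aa}rding-type estimate for the scalar positive principal symbol, that $P+c$ is invertible on $L^2(X,E)$ for $c$ large: injectivity together with closed range comes from the standard elliptic estimate $\|u\|_{H^2}\leq C\bigl(\|(P+c)u\|_{L^2}+\|u\|_{L^2}\bigr)$, and surjectivity follows by applying the same argument to $P^\ast+\bar c$, whose principal symbol is again $\|\xi\|^2\Id$. The inverse factors through $H^2(X,E)$, hence is compact on $L^2(X,E)$ by the Rellich embedding, and standard spectral theory of compact operators then yields that $\spec(P)$ is discrete and consists of eigenvalues of finite algebraic multiplicity with no finite accumulation point.

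For the sector containment, the parameter-ellipticity above permits, following \cite{Sh}, the construction of a parameter-dependent parametrix $Q(\lambda)$ on $\Lambda_{[\varepsilon,2\pi-\varepsilon]}$ with
\[
(P-\lambda)Q(\lambda)=\Id+R(\lambda),\qquad \|R(\lambda)\|_{L^2\to L^2}=O(|\lambda|^{-N})
\]
for every $N$ as $|\lambda|\to\infty$ in the sector. Choosing $R>0$ so that $\|R(\lambda)\|<1/2$ for $|\lambda|>R$ in the sector, the operator $\Id+R(\lambda)$ is invertible by Neumann series and therefore so is $P-\lambda$; this gives $\spec(P)\cap\Lambda_{[\varepsilon,2\pi-\varepsilon]}\subset B_R(0)$, which together with the first step proves the lemma. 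The only genuine technical content is the construction of the parameter-dependent parametrix, which is entirely classical and already carried out in \cite{Sh}; I do not expect any obstacle beyond invoking this machinery after the parameter-ellipticity has been recorded.
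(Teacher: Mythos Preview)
Your proposal is correct and follows essentially the same route as the paper: the paper's proof consists of two bare citations to \cite{Sh} (Theorem~9.3 for the sector containment, Theorem~8.4 for discreteness), and what you have written is precisely an unpacking of the parameter-ellipticity check and parametrix construction that underlie those theorems. The only remark is that your separate treatment of discreteness via G{\aa}rding plus Rellich is slightly redundant, since once the parameter-parametrix gives invertibility of $P-\lambda$ for one $\lambda$ in the sector, compactness of the resolvent (and hence discreteness) already follows from the factorization through $H^2(X,E)$; but this is a matter of organization, not a gap.
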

\begin{proof} The first statement follows from \cite[Theorem 9.3]{Sh}. 
The discreteness of the spectrum follows from \cite[Theorem 8.4]{Sh}. 
\end{proof}

It follows from  Lemma \ref{spec1} that there exists an
Agmon angle $\theta$  for $P$ and we can 
define the square root $P_\theta^{1/2}$ as in \cite{Sh}. For the convenience 
of the reader we include some details. Denote by $\spec(P)$ the spectrum of 
$P$. Let $\varepsilon>0$. For simplicity we assume that 
$0\notin\sigma(P)$. 
By Lemma \ref{spec1} there exist $0<\theta<2\pi$ and $\varepsilon>0$ such that
\[
\spec(P)\cap \Lambda_{[\theta-\varepsilon,\theta+\varepsilon]}=\emptyset.
\]
$\theta$ is called an Agmon angle for $P$. Since $\spec(P)$ is discrete
and $0\notin\sigma(P)$, there exists also $r_0>0$ such that
\[
\spec(P)\cap \{z\in\C\colon |z|<2r_0\}=\emptyset.
\]
Define the contour $\Gamma=\Gamma_{\theta,r_0}\subset \C$ as the union of three
curves $\Gamma=\Gamma_1\cup\Gamma_2\cup\Gamma_3$, where
\[
\begin{split}
\Gamma_1=\{re^{i\theta}\colon r_0&\le r<\infty\},\quad \Gamma_2=\{r_0e^{i\alpha}
\colon \theta\le\alpha\le\theta+2\pi\},\\
&\Gamma_3=\{re^{i(\theta+2\pi)}\colon r_0\le r<\infty\}.
\end{split}
\]
Put
\[
P^{-1/2}_\theta=\frac{i}{2\pi}\int_{\Gamma_{\theta,r_0}}
\lambda^{-1/2}(P-\lambda)^{-1}\,d\lambda.
\]
By \cite[Corollary 9.2, Chapt. II, \S 9]{Sh} we have 
$\parallel(P-\lambda)^{-1}\parallel\le C|\lambda|^{-1}$ for 
$\lambda\in\Gamma_{\theta,r_0}$. Therefore the integral is absolutely 
convergent. Put
\[
P^{1/2}_\theta=P\cdot P^{-1/2}_\theta.
\]
Then $P^{1/2}_\theta$ satisfies $(P^{1/2}_\theta)^2=P$. If $\theta$ is fixed, we 
simply denote this operator by $P^{1/2}$. 
We recall \cite{See}, \cite[Theorem 11.2]{Sh} that $P^{1/2}$ is a 
classical pseudo-differential operator with principal symbol 
\[
\sigma(\rP)(x,\xi)=\parallel\xi\parallel_x\cdot\Id_{E_x}.
\]
In any coordinate chart, the complete symbol $q(x,\xi)$ of $P^{1/2}$
has an asymptotic expansion
\[
q(x,\xi)\sim\sum_{j=0}^\infty q_{1-j}(x,\xi),\]
where $q_{1-j}(x,\xi)$ is homogeneous in $\xi$ of order $1-j$. The same holds 
for $\rD$. Since the principal symbols coincide, we get
\begin{equation}\label{sqroot}
\rP=\rD+B,
\end{equation}
where $B$ is a pseudo-differential operator of order zero. Especially, $B$
is a bounded operator in $L^2(X,E)$. 

Let $R_\lambda(\rP)=(\rP-\lambda\I)^{-1}$ and $R_\lambda(\rD)
=(\rD-\lambda\I)^{-1}$ be the resolvents of $\rP$ and $\rD$, respectively. For
$\lambda\not\in\spec(\rD)$ we have the following equality
\begin{equation}\label{resolv1}
\rP-\lambda\I=(\I+BR_\lambda(\rD))(\rD-\lambda\I).
\end{equation}
Since $\rD$ is self-adjoint, the resolvent of $\rD$ satisfies
\begin{equation}\label{resolv2}
\parallel R_\lambda(\rD)\parallel\le |\Im(\lambda)|^{-1}
\end{equation}
\cite[Chapt. V, \S3.5]{Ka}. Let $b=2\parallel B \parallel$.  It follows
from (\ref{resolv2}) that  for $|\Im(\lambda)|\ge b$ we have
$\parallel BR_\lambda(\rD)\parallel\le 1/2$. Thus in this range of $\lambda$
the operator $\I+BR_\lambda(\rD)$ is invertible and 
\[
R_\lambda(\rP)=R_\lambda(\rD)(\I+BR_\lambda(\rD))^{-1}.
\]
Combined with (\ref{resolv2}) we get
\begin{equation}\label{resolv3}
\parallel R_\lambda(\rP)\parallel \le 2|\Im(\lambda)|^{-1},\quad 
|\Im(\lambda)|\ge b.
\end{equation}

We can now summarize the spectral properties of $\rP$.
\begin{lem}\label{spec2}
The resolvent of $\rP$ is compact.
The spectrum of $\rP$ is discrete. There exist $b>0$ and $c\in\R$
such that the spectrum of $\rP$ is contained in the domain
\begin{equation}
\Omega_{b,c}=\bigl\{\lambda\in\C\colon \Re(\lambda)>c,\;
\;|\Im(\lambda)|<b\bigr\}.
\end{equation}\label{domain}
\end{lem}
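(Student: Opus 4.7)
The plan is to deduce all three conclusions from the analytic groundwork already laid down in (\ref{sqroot})--(\ref{resolv3}) together with elementary elliptic regularity.

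First I would establish compactness of the resolvent. Since $\rP$ is a classical first-order elliptic pseudo-differential operator with positive scalar principal symbol, elliptic regularity gives that for any $\lambda_0$ outside $\spec(\rP)$ the resolvent $R_{\lambda_0}(\rP)$ maps $L^2(X,E)$ continuously into the Sobolev space $H^1(X,E)$. Choosing any $\lambda_0$ with $|\Im(\lambda_0)| \geq b$ (such a $\lambda_0$ lies outside the spectrum by (\ref{resolv3})), I obtain a bounded map $L^2(X,E) \to H^1(X,E)$; composing with the compact Rellich-Kondrachov embedding $H^1(X,E) \hookrightarrow L^2(X,E)$ yields that $R_{\lambda_0}(\rP)$ is compact on $L^2(X,E)$. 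Discreteness of $\spec(\rP)$ then follows from the standard fact that an operator with compact resolvent has spectrum consisting only of isolated eigenvalues of finite algebraic multiplicity.

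Next I would control the location of the spectrum. The strip bound $|\Im(\lambda)| < b$ is immediate from (\ref{resolv3}): that estimate already provides a bounded inverse of $\rP-\lambda\I$ whenever $|\Im(\lambda)| \geq b$, so no such $\lambda$ can belong to $\spec(\rP)$. For the half-plane bound $\Re(\lambda) > c$ I would re-run exactly the same mechanism in the real direction, exploiting the self-adjointness and non-negativity of $\rD$. Since $\spec(\rD)\subset[0,\infty)$, the self-adjoint resolvent estimate improves to
\[
\|R_\lambda(\rD)\|\leq \operatorname{dist}(\lambda,[0,\infty))^{-1},
\]
and for $\Re(\lambda) \leq -2\|B\|$ one has $\operatorname{dist}(\lambda,[0,\infty)) \geq |\Re(\lambda)| \geq 2\|B\|$. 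Hence $\|B R_\lambda(\rD)\| \leq 1/2$, the operator $\I + BR_\lambda(\rD)$ is invertible by Neumann series, and the factorization (\ref{resolv1}) yields
\[
R_\lambda(\rP) = R_\lambda(\rD)\bigl(\I + BR_\lambda(\rD)\bigr)^{-1}.
\]
Thus $\lambda \notin \spec(\rP)$, and taking $c := -2\|B\|$ completes the proof.

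The lemma is essentially a direct consequence of the preceding estimates, so no single step is technically deep. The one subtle point, and the main thing to be careful about, is that (\ref{resolv3}) by itself only rules out eigenvalues far from the real axis; the upgrade to the half-plane bound requires the sharper self-adjoint resolvent estimate for $\rD$ (distance to $[0,\infty)$ rather than merely distance to $\R$), so that pushing $\lambda$ far to the left---not merely far from the real axis---also forces $\|BR_\lambda(\rD)\|$ to be small.
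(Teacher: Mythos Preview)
Your proof is correct and follows essentially the same route as the paper: compactness of the resolvent from ellipticity of $\rP$ on a closed manifold, discreteness as an immediate consequence, and the location of the spectrum via the factorization (\ref{resolv1}) and the perturbation bounds around (\ref{resolv3}). You are in fact more explicit than the paper, which tersely attributes the full containment in $\Omega_{b,c}$ to (\ref{resolv3}); your observation that the half-plane bound $\Re(\lambda)>c$ requires the sharper self-adjoint estimate $\|R_\lambda(\rD)\|\le\operatorname{dist}(\lambda,[0,\infty))^{-1}$ (rather than just (\ref{resolv2})) in the same Neumann-series argument supplies a detail the paper leaves implicit.
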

\begin{proof}
Since $\rP$ is an elliptic pseudo-differential operator of order 1 on a closed 
manifold,
its resolvent is compact and hence, its spectrum is discrete. The remaining 
statements are a consequence of 
(\ref{resolv3}).
\end{proof}

Though $P$ is not self-adjoint in general, it still has nice spectral 
properties \cite[Chapt. I, \S 8]{Sh}. Given $\lambda_0\in\spec(P)$, let
$\Gamma_{\lambda_0}$ be a small circle around $\lambda_0$ which
contains no other points of $\spec(P)$. Put
\[
\Pi_{\lambda_0}=\frac{i}{2\pi}\int_{\Gamma_{\lambda_0}}R_\lambda(P)\;d\lambda.
\]
Then $\Pi_{\lambda_0}$ is the projection onto the root subspace $V_{\lambda_0}$.
This is a finite-dimensional subspace of $C^\infty(X,E)$ which
is invariant under $P$ and there exists $N\in\N$ such that 
$(P-\lambda_0\I)^NV_{\lambda_0}=0$. Furthermore, there is a closed complementary
subspace $V_{\lambda_0}^\prime$ to $V_{\lambda_0}$ in $L^2(X,E)$ 
which is invariant under the 
closure $\bar P$ of $P$ in $L^2$ and the restriction of $(\bar P-\lambda_0\I)$ 
to $V_{\lambda_0}^\prime$ has a bounded inverse. The {\it algebraic multiplicity}
$m(\lambda_0)$ of $\lambda_0$ is defined as
\[
m(\lambda_0)=\dim V_{\lambda_0}.
\]
If $\lambda_1,\lambda_2\in
\spec(P)$ with $\lambda_1\not=\lambda_2$, then the projections $\Pi_{\lambda_1}$
and $\Pi_{\lambda_2}$ are disjoint, i.e., 
\[
\Pi_{\lambda_1}\Pi_{\lambda_2}=\Pi_{\lambda_2}\Pi_{\lambda_1}=0.
\]
Since the principal symbols of $P$ and $\Delta_E$ are the same,
we have $P=\Delta+D$,
where $D$ is a first order differential operator. Let $R_\lambda(\Delta)$ be 
the resolvent of $\Delta$. Then $DR_\lambda(\Delta)$ is a compact operator. 
This means that $D$ is compact relative to $\Delta$ and it follows from
\cite[I,\S 4, Theorem 4.3]{Mk} that the root vectors are complete. Thus
$L^2(X,E)$ is the closure of the algebraic direct sum of finite-dimensional 
$P$-invariant subspaces $V_k$
\begin{equation}\label{directsum}
L^2(X,E)=\overline{\bigoplus_{k\ge 1} V_k}
\end{equation}
such that the restriction of $P$ to $V_k$ has a unique eigenvalue $\lambda_k$
and $|\lambda_k|\to\infty$. In general, the sum (\ref{directsum}) is not a sum
of mutually orthogonal subspaces.

It follows from (\ref{directsum}) that 
$P^{1/2}_\theta$ has a similar spectral decomposition with eigenvalues 
$\lambda^{1/2}_\theta$, $\lambda\in\spec(P)$, and 
$m(\lambda^{1/2}_\theta)=m(\lambda)$.

Given $r>0$, let 
\[
N(r,P):=\sum_{\lambda\in\spec(P),\;|\lambda|\le r}m(\lambda).
\]
be the counting function of the eigenvalues of $P$, where eigenvalues 
are counted with their algebraic multiplicity. 
\begin{lem}\label{counting} 
Let $n=\dim X$. We have
\[
N(r,P)=\frac{\rkk(E)\vol(X)}{(4\pi)^{n/2}\Gamma(n/2+1)} r^{n/2}+o(r^{n/2}),
\quad r\to\infty.
\]
\end{lem}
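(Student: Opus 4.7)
The plan is to reduce the asymptotic for $N(r,P)$ to the classical Weyl law for a positive self-adjoint elliptic operator by comparing $P$ to the Laplace-type operator $\Delta_E$ through their square roots, exploiting the decomposition (\ref{sqroot}).

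First I pass to $\rP$ via the correspondence recorded just before the statement: each $\lambda\in\spec(P)$ determines a unique $\mu=\lambda^{1/2}_\theta\in\spec(\rP)$ with $|\mu|=|\lambda|^{1/2}$ and the same algebraic multiplicity, so $N(r,P)=N(r^{1/2},\rP)$. It therefore suffices to prove
\[
N(r,\rP)=\frac{\rkk(E)\vol(X)}{(4\pi)^{n/2}\Gamma(n/2+1)}\,r^n+o(r^n).
\]
For the self-adjoint comparison operator $\rD$, this follows from the classical Weyl law: $\Delta_E$ is a nonnegative self-adjoint elliptic operator of order two with principal symbol $\|\xi\|^2\Id_{E_x}$, so H\"ormander's theorem gives $N(r,\Delta_E)=C\,r^{n/2}+O(r^{(n-1)/2})$ with $C$ the constant in the statement, and the identity $N(r,\rD)=N(r^2,\Delta_E)$ yields the corresponding asymptotic for $\rD$.

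By (\ref{sqroot}), $\rP=\rD+B$ with $B$ bounded on $L^2(X,E)$. The remaining task is to transfer the asymptotic from $\rD$ to $\rP$. I would proceed via a comparison of singular values: from $\rP^*\rP=\rD^2+\rD B+B^*\rD+B^*B$, the self-adjoint min--max principle gives $s_k(\rP)=(1+o(1))\lambda_k(\rD)$ as $k\to\infty$. By Lemma \ref{spec2}, the spectrum of $\rP$ is confined to a horizontal strip, so for large eigenvalues $|\mu|$ and $\Re\mu$ are comparable; combining this with Weyl's multiplicative inequality $\prod_{k\le N}|\mu_k(\rP)|\le\prod_{k\le N}s_k(\rP)$, together with the analogous lower bound obtained by applying the inequality to $\rP^{-1}$ on a cofinite invariant subspace, yields the required Weyl asymptotic for $N(r,\rP)$.

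\textbf{Main obstacle.} The delicate step is the last one: passing from the Weyl law for the self-adjoint $\rD$ to the algebraic-multiplicity counting for the non-self-adjoint $\rP$. Bounded symmetric perturbations are handled by min--max, but here one must carefully control algebraic multiplicities and potential Jordan blocks. This is classical non-self-adjoint spectral theory in the spirit of Markus and Gohberg--Krein, and the hypotheses one needs (compact resolvent of $\rD$, boundedness of $B$, and spectral confinement in Lemma \ref{spec2}) are all at our disposal.
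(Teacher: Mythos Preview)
Your strategy coincides with the paper's: obtain the Weyl law for the self-adjoint $\Delta_E$ and then transfer it to the non-self-adjoint operator by a perturbation argument. The paper does this without passing to square roots. It derives $N(r,\Delta_E)$ from the small-time heat expansion $\Tr e^{-t\Delta_E}\sim (4\pi t)^{-n/2}\rkk(E)\vol(X)+\cdots$ via a Tauberian theorem, and then simply cites \cite[I, \S 8, Corollary 8.5]{Mk}, applied to the decomposition $P=\Delta_E+D$ with $D$ a first-order differential operator (hence $\Delta_E$-subordinate), to conclude $N(r,P)\sim N(r,\Delta_E)$. Your detour through $\rP=\rD+B$ is valid but buys nothing: the Markus result applies equally well, and more naturally, at order two.

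Where your writeup is misleading is the sketch via Weyl's multiplicative inequality. Two-sided control of $\prod_{k\le N}|\mu_k|$ by $\prod_{k\le N}s_k$ (even granting the companion inequality from $\rP^{-1}$) does not by itself determine the counting function $N(r,\rP)$: product constraints are compatible with eigenvalue distributions quite different from the singular-value distribution, and the strip condition from Lemma~\ref{spec2} alone does not close that gap. You correctly recognize at the end that what is actually required is the Keldysh/Markus--Matsaev machinery---but that is a citation, not a consequence of the Weyl-inequality argument you outlined. The paper makes exactly that citation, directly for $P$.
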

\begin{proof}
It is well know that $\Tr(e^{-t\Delta_E})$ has an asymptotic 
expansion of the form
\[
\Tr(e^{-t\Delta_E})\sim t^{-n/2}\sum_{k\ge 0}a_kt^k,\quad t\to +0
\]
(see \cite[Lemma 1.8.3]{Gi}), and by \cite[Lemma 4.1.4]{Gi}
the leading coefficient $a_0$ 
is given by $a_0=(4\pi)^{-n/2}\rkk(E)\vol(X)$.
Let $N(r,\Delta_E)$ be the counting function of the eigenvalues of $\Delta_E$. 
Using the Tauberian theorem (see \cite[Chapt. II, \S 14]{Sh}), we get
\begin{equation}\label{weyllaw}
N(r,\Delta_E)=\frac{\rkk(E)\vol(X)}{(4\pi)^{n/2}\Gamma(n/2+1)}  
r^{n/2}+o(r^{n/2}),\quad r\to\infty.
\end{equation}
The lemma follows from \cite[I, \S 8, Corollary 8.5]{Mk}.
\end{proof}
Let $h\in C_c^\infty(\R)$ be even and set
\[
\varphi(\lambda)=\frac{1}{\sqrt{2\pi}}\int_\R h(r)\cos(t\lambda)\,dr,\quad 
\lambda\in\C.
\]
Then $\varphi(\lambda)$ is rapidly decreasing in each strip 
$|\Im(\lambda)|<\delta$, $\delta>0$. For $b>0$ and $d\in\R$ let 
$\Gamma=\Gamma_{b,d}\subset \C$ be the contour which is union of the two 
half-lines $L_{\pm b,d}=\{z\in\C\colon \Im(z)=\pm b,\;\Re(z)\ge d\}$ and the 
semi-circle $S=\{d+be^{i\theta}\colon \pi/2\le\theta\le 3\pi/2\}$.
By Lemma \ref{spec2} there exist $b>0$, $d\in\R$ such that 
$\spec(\rP)$ is contained in the interior of $\Gamma_{b,d}$. Put
\begin{equation}\label{integral1}
\varphi(\rP):=\frac{i}{2\pi}
\int_\Gamma\varphi(\lambda)(\rP-\lambda\I)^{-1}\,d\lambda.
\end{equation}
It follows from (\ref{resolv3}) that the integral is absolutely convergent. 

\begin{lem}\label{smoothing}
$\varphi(\rP)$ is an integral operator with a smooth kernel.
\end{lem}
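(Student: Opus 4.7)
The plan is to verify that both $\varphi(\rP)$ and its formal adjoint send $L^2(X,E)$ continuously into $C^\infty(X,E)$; the Schwartz kernel theorem then gives a smooth Schwartz kernel. Since $\rP$ is a first-order elliptic pseudodifferential operator on the closed manifold $X$, elliptic regularity identifies the $L^2$-domain of $\rP^N$ with $H^N(X,E)$, and $\bigcap_{N\ge 0}H^N(X,E)=C^\infty(X,E)$. Hence it suffices to prove that $\rP^N\varphi(\rP)$ is bounded on $L^2$ for every $N\in\N$.

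From $\rP R_\lambda(\rP)=\I+\lambda R_\lambda(\rP)$ an easy induction gives
\[
\rP^N R_\lambda(\rP)=\lambda^N R_\lambda(\rP)+\sum_{j=0}^{N-1}\lambda^{j}\rP^{N-1-j}.
\]
Let $\Gamma_T$ be the truncation of $\Gamma$ at $\Re\lambda=T$. For a smooth $f$, applying $\rP^N$ inside the now finite integral over $\Gamma_T$ decomposes $\rP^N\cdot\frac{i}{2\pi}\int_{\Gamma_T}\varphi(\lambda)R_\lambda(\rP)f\,d\lambda$ as the sum of $\frac{i}{2\pi}\int_{\Gamma_T}\varphi(\lambda)\lambda^N R_\lambda(\rP)f\,d\lambda$ and the polynomial piece $\sum_{j=0}^{N-1}c_{T,j}\rP^{N-1-j}f$ with coefficients $c_{T,j}=\frac{i}{2\pi}\int_{\Gamma_T}\varphi(\lambda)\lambda^j\,d\lambda$. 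The key point is that $c_{T,j}\to 0$ as $T\to\infty$. Indeed, $\varphi$ is the Fourier cosine transform of an $h\in C_c^\infty(\R)$, so by Paley--Wiener it is entire and rapidly decreasing in $\Re\lambda$ on every horizontal strip; closing $\Gamma_T$ by the vertical segment from $(T,-b)$ to $(T,b)$ gives a closed contour enclosing no singularities of $\varphi(\lambda)\lambda^j$, and the closing segment contributes $O(T^{j-M})$ for every $M$, so by Cauchy's theorem $\int_\Gamma\varphi(\lambda)\lambda^j\,d\lambda=0$. Letting $T\to\infty$ and using closedness of $\rP^N$ delivers, for smooth $f$,
\[
\rP^N\varphi(\rP)f=\frac{i}{2\pi}\int_\Gamma\varphi(\lambda)\lambda^N R_\lambda(\rP)f\,d\lambda,
\]
whose $L^2$-norm is dominated by $C_N\parallel f\parallel_{L^2}$: on the two half-lines $\parallel R_\lambda(\rP)\parallel\le 2/b$ by (\ref{resolv3}), on the compact semicircle $\parallel R_\lambda(\rP)\parallel$ is bounded by continuity, and $|\varphi(\lambda)\lambda^N|$ is rapidly decreasing on $\Gamma$. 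Density of $C^\infty$ in $L^2$ together with closedness of $\rP^N$ then extends the estimate to all of $L^2$.

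For the adjoint, $P^*=(\rP^*)^2$ is elliptic with the same leading symbol $\parallel\xi\parallel^2\Id_{E_x}$, so Lemmas \ref{spec1} and \ref{spec2} apply verbatim to $\rP^*$. Using $R_\lambda(\rP)^*=R_{\bar\lambda}(\rP^*)$ and reflecting the contour across the real axis identifies $\varphi(\rP)^*$ with $\psi(\rP^*)$, where $\psi(\mu)=\overline{\varphi(\bar\mu)}$ is the Fourier cosine transform of $\bar h\in C_c^\infty(\R)$ and enjoys every property of $\varphi$ used above. Running the preceding argument for $\psi$ and $\rP^*$ yields $\varphi(\rP)^*\colon L^2\to C^\infty$, and the Schwartz kernel theorem then forces the kernel of $\varphi(\rP)$ to be smooth.

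The main obstacle is the vanishing of the moments $\int_\Gamma\varphi(\lambda)\lambda^j\,d\lambda$: without this cancellation the induction formula for $\rP^N R_\lambda(\rP)$ leaves behind unbounded operators $\rP^{N-1-j}$, and it is exactly the Paley--Wiener decay of $\varphi$ in horizontal strips, not merely Schwartz decay along the real axis, that lets one close the contour at $\Re\lambda\to+\infty$.
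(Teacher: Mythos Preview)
Your argument is correct and rests on the same key mechanism as the paper's: the vanishing of all contour moments $\int_\Gamma\lambda^{j}\varphi(\lambda)\,d\lambda=0$, which follows from the Paley--Wiener decay of $\varphi$ in horizontal strips and lets one absorb powers of $\rP$ into the integrand. The paper proceeds slightly more directly: it observes $\int_\Gamma\varphi(\lambda)(P-\lambda^2)R_\lambda(\rP)\,d\lambda=\int_\Gamma\varphi(\lambda)(\rP+\lambda)\,d\lambda=0$, and from this obtains in one stroke the \emph{two-sided} identity
\[
P^{k}\varphi(\rP)P^{l}=\frac{i}{2\pi}\int_\Gamma\lambda^{2(k+l)}\varphi(\lambda)R_\lambda(\rP)\,d\lambda,
\]
which is bounded on $L^2$; ellipticity of $P$ then gives $\varphi(\rP)\colon H^s\to H^r$ for all $s,r$, hence a smooth kernel.

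Your route is a bit more roundabout. You establish only the one-sided bound $\rP^{N}\varphi(\rP)\colon L^2\to L^2$, then run a separate argument for $\varphi(\rP)^*=\psi(\rP^*)$, and finally appeal to ``the Schwartz kernel theorem'' to conclude smoothness from $\varphi(\rP),\varphi(\rP)^*\colon L^2\to C^\infty$. That last inference is true but not as immediate as you suggest: the smooth-kernel criterion requires $\varphi(\rP)\colon \mathcal D'\to C^\infty$, and passing from $L^2\to C^\infty$ together with $\mathcal D'\to L^2$ (the dual of the adjoint bound) to $\mathcal D'\to C^\infty$ needs an interpolation step between $A\colon H^0\to H^{t}$ and $A\colon H^{-s}\to H^0$. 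More to the point, the detour is unnecessary: your own formula already yields $\varphi(\rP)\rP^{N}f=\rP^{N}\varphi(\rP)f$ on smooth $f$ (the polynomial remainder again vanishes by the moment identities), so $\rP^{M}\varphi(\rP)\rP^{N}=\rP^{M+N}\varphi(\rP)$ is bounded by the very estimate you proved, recovering the paper's two-sided conclusion without touching the adjoint.
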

\begin{proof}
First note that
\[
\int_\Gamma\varphi(\lambda)(P-\lambda^2)
(\rP-\lambda)^{-1}\,d\lambda=\int_\Gamma\varphi(\lambda)
(\rP+\lambda)\,d\lambda=0.
\]
This implies that for $k,l\in\N$ we have
\[
P^k\varphi(\rP)P^l=\frac{i}{2\pi}
\int_\Gamma\lambda^{2(k+l)}\varphi(\lambda)(\rP-\lambda)^{-1}\,d\lambda.
\]
The function $\lambda\mapsto\lambda^{2(k+l)}\varphi(\lambda)$ is rapidly
decreasing on $|\Im(\lambda)|=\pm b$. Hence $P^k\varphi(\rP)P^l$ is
a bounded operator in $L^2$. Since $P$ is elliptic, it follows that for all
$s,r\in\R$, 
$\varphi(\rP)$ extends to a bounded operator from $H^s(X,E)$
to $H^r(X,E)$, which shows that $\varphi(\rP)$ is a smoothing operator and
hence, it is an integral operator with a smooth kernel.
\end{proof}

In order to continue, we need to establish an auxiliary result about smoothing
operators. Let 
\[
A\colon L^2(X,E)\to L^2(X,E)
\]
be an integral operator with a smooth kernel $H\in C^\infty(X\times X,E\boxtimes
E^*)$. 
\begin{prop}\label{trace} $A$ is a trace class operator and
\begin{equation}\label{matrixtr}
\Tr(A)=\int_X\tr H(x,x)\;d\mu(x).
\end{equation}
\end{prop}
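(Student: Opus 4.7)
The plan is to exhibit $A$ as a product of two Hilbert--Schmidt operators with smooth kernels and then compute the trace using the standard product formula for HS operators.

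For the factorization, pick any self-adjoint nonnegative Laplace-type operator $\Delta_E$ on $E$ (for instance a Bochner Laplacian $\nabla^*\nabla$ attached to some Hermitian connection). By Lemma~\ref{counting} its eigenvalues $\mu_k$ satisfy $\mu_k\ge c\,k^{2/n}$, so for $m$ a large enough positive integer the resolvent power $Q:=(\I+\Delta_E)^{-m}$ is Hilbert--Schmidt with smooth kernel $K_Q$; to see smoothness, $(\I+\Delta_E)^{-m}$ raises Sobolev regularity by $2m$, so if $2m>n+N$ the kernel is $C^N$, and $N$ can be taken arbitrarily large. Since $H$ is smooth, the operator $B:=A\circ(\I+\Delta_E)^m$ is again smoothing; its kernel $K_B$ is obtained by applying the differential operator $(\I+\Delta_E)^m$ to $H(x,y)$ in the $y$-variable, and so is smooth. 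Both $B$ and $Q$ are thus HS with smooth kernels, and the identity $A=B\circ Q$ presents $A$ as a product of two HS operators, which is trace class.

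For the trace formula, I would invoke the standard identity for HS operators on $L^2(X,E)$,
\[
\Tr(BQ) = \int_{X\times X}\tr\bigl(K_B(x,y)K_Q(y,x)\bigr)\,d\mu(x)\,d\mu(y),
\]
which follows from the isometry $L^2(X\times X,E\boxtimes E^*)\cong \mathrm{HS}(L^2(X,E))$ together with the identification $\Tr(ST)=\langle S,T^*\rangle_{\mathrm{HS}}$. On the other hand, the kernel of the composition $A=B\circ Q$ satisfies
\[
H(x,z)=\int_X K_B(x,y)K_Q(y,z)\,d\mu(y),
\]
so setting $z=x$ and taking the pointwise trace yields $\tr H(x,x)=\int_X \tr\bigl(K_B(x,y)K_Q(y,x)\bigr)\,d\mu(y)$. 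Integrating over $x$ identifies $\int_X \tr H(x,x)\,d\mu(x)$ with $\Tr(BQ)=\Tr A$, which is the claim.

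The main obstacle is making the argument rigorous around the smoothness of $K_Q$ and the validity of the HS product-trace identity, but both are classical: the smoothness of $K_Q$ comes from elliptic pseudodifferential / Sobolev regularity applied to a high negative power of $\I+\Delta_E$, and the HS product-trace identity is a purely Hilbertian fact proved by expanding in any orthonormal basis and applying Parseval. Once these are in place, the remainder is the routine kernel computation above.
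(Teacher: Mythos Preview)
Your proof is correct and follows essentially the same strategy as the paper: both introduce a self-adjoint Bochner Laplacian $\Delta_E$ and factor $A$ as $\bigl(A(\I+\Delta_E)^m\bigr)\cdot(\I+\Delta_E)^{-m}$, a product of two Hilbert--Schmidt operators with smooth kernels. The paper writes this factorization out in an eigenbasis of $\Delta_E$ and computes the trace as the diagonal sum $\sum_i a_{i,i}$, while you work directly with kernels and the HS product-trace identity; the underlying argument is the same.
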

\begin{proof} We generalize the proof of Theorem 1 in 
\cite[Chapt VII, \S 1]{La}.
Let $\nabla^E$ be a Hermitian connection in $E$ and let $\Delta_E
=(\nabla^E)^*\nabla^E$
be the associated Bochner-Laplace operator. Then $\Delta_E$ is a second order 
elliptic operator which is essentially self-adjoint and non-negative. Its
spectrum is discrete. 
Let $\{\phi_j\}_{j\in\N}$ be an
orthonormal basis of $L^2(X,E)$ consisting of eigensections of $\Delta_E$ with
eigenvalues $0\le\lambda_1\le\lambda_2\le\cdots\to \infty $.  
We can expand $H$ in the orthonormal basis as
\begin{equation}\label{expans}
H(x,y)=\sum_{i,j=1}^\infty a_{i,j}\phi_i(x)\otimes\phi_j^*(y),
\end{equation}
where 
\begin{equation}\label{coeffic}
a_{i,j}=\langle A\phi_i,\phi_j\rangle.
\end{equation}
Since $H$ is smooth, the coefficients $a_{i,j}$ are rapidly decreasing. Indeed,
for every $N$ we have
\[
(1+\lambda_i+\lambda_j)^Na_{i,j}=\langle (\I+\Delta_E\otimes\I
+\I\otimes\Delta_E)^N H,\phi_i\otimes\phi_j^*\rangle.
\]
Hence for every $N\in\N$ there exists $C_N>0$ such that 
\[
|a_{i,j}|\le C_N(1+\lambda_i+\lambda_j)^{-N},\quad i,j\in\N.
\]
This implies that the series (\ref{expans}) converges in the $C^\infty$ 
topology. Let 
$P_{i,j}$ be the integral operator with kernel $\phi_i\otimes \phi_j^*$. Thus
\[
P_{i,j}(\phi_k)=\begin{cases}0,&\;k\not=j;\\\phi_i,&\;k=j.
\end{cases}
\]
Let $P_j$ be the orthogonal projection of $L^2(X,E)$ onto the 1-dimensional
subspace $\C\phi_j$. Put
\[
B=\sum_{i,j}a_{i,j}(1+\lambda_j)^{n}P_{i,j},\quad C=\sum_{j}(1+\lambda_j)^{-n}
P_j.
\]
Then $A=BC$ and it follows from (\ref{weyllaw}) that $B$ and $C$ are 
Hilbert-Schmidt operators. Thus $A$ is a 
trace class operator. Furthermore, by (\ref{expans}) and (\ref{coeffic}) we get
\[
\int_X\tr H(x,x)\;dx=\sum_{i,j=1}^\infty a_{i,j}\int_X\langle\phi_i(x),\phi_j(x)
\rangle\; dx=\sum_{i=1}^\infty a_{i,i}=\Tr(A).
\]
\end{proof}

Now we apply this result to $\varphi(\rP)$. Let $K_\varphi(x,y)$ be the 
kernel of $\varphi(\rP)$. Then by Proposition \ref{trace}, $\varphi(\rP)$
is a trace class operator and we have
\begin{equation}\label{matrixtr1}
\Tr\varphi(\rP)=\int_X\tr K_\varphi(x,x)\;d\mu(x).
\end{equation}
By Lidskii's theorem \cite[Theorem 8.4]{GK} the trace is
equal to the sum of the eigenvalues of $\varphi(\rP)$, counted with their 
algebraic multiplicities. The eigenvalues of $\varphi(\rP)$ and their algebraic
multiplicities can be determined as follows. Given $N\in\N$, 
let $\Pi_N$ denote the projection onto the direct
sum of the root subspaces $V_k$, $k\le N$, of $P$. As explained above, we have
\[
P\Pi_N=\sum_{k=1}^N(\lambda_k\Pi_k+D_k),
\]
where $\Pi_k$ is the projection onto $V_k$ and $D_k$ is a nilpotent operator
in $V_k$. Then it follows from \cite[I, (5.50)]{Ka} that 
\[
\varphi(\rP)\Pi_N=\sum_{k=1}^N(\varphi(\lambda^{1/2}_k)\Pi_k+D_k^\prime),
\]
where $D_k^\prime$ is again a nilpotent operator in $V_k$. Thus $\varphi(\rP)$ 
leaves the decomposition (\ref{directsum}) invariant and the restriction of
$\varphi(\rP)$ to $V_k$ has a unique eigenvalue $\varphi(\lambda^{1/2}_k)$.
Of course, some of the eigenvalues $\varphi(\lambda^{1/2}_k)$ may coincide in
which case the root space is the sum of the corresponding root spaces $V_k$.
Now, applying Lidskii's theorem \cite[Theorem 8.4]{GK} and (\ref{matrixtr1}),
we get the following proposition.
\begin{prop}\label{trace1} Let $\varphi\in\cS(\R)$ be even and suppose that 
$\widehat\varphi\in C^\infty_c(\R)$. Then we have
\begin{equation}\label{trace1a}
\sum_{\lambda\in\spec(P)}m(\lambda)\varphi(\lambda^{1/2})=
\int_X\tr K_\varphi(x,x)\;dx.
\end{equation}
\end{prop}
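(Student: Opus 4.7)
The strategy is to combine Lemma \ref{smoothing} and Proposition \ref{trace} to recognize the right-hand side as the operator trace of $\varphi(\rP)$, and then to apply Lidskii's theorem together with the root-space decomposition (\ref{directsum}) in order to express that trace as $\sum_{\lambda\in\spec(P)} m(\lambda)\varphi(\lambda^{1/2})$.

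First, by Lemma \ref{smoothing}, $\varphi(\rP)$ is an integral operator with a smooth kernel $K_\varphi\in C^\infty(X\times X, E\boxtimes E^*)$. Proposition \ref{trace} therefore yields that $\varphi(\rP)$ is of trace class and
\[
\Tr\varphi(\rP)=\int_X \tr K_\varphi(x,x)\,d\mu(x),
\]
which is precisely the right-hand side of the claim. It remains to evaluate this trace from the spectrum of $P$. Second, by Lidskii's theorem \cite[Theorem 8.4]{GK}, $\Tr\varphi(\rP)$ equals the sum of the eigenvalues of $\varphi(\rP)$, each counted with its algebraic multiplicity. Thus the proof reduces to identifying these eigenvalues with $\varphi(\lambda^{1/2})$ for $\lambda\in\spec(P)$, respecting multiplicities.

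For this identification I would use the root-space decomposition (\ref{directsum}): on each finite-dimensional $P$-invariant subspace $V_k$, the operator $P$ has unique eigenvalue $\lambda_k$ and decomposes as $\lambda_k\Pi_k+D_k$ with $D_k$ nilpotent. Inserting this into the contour integral (\ref{integral1}) defining $\varphi(\rP)$, expanding the resolvent on $V_k$ as a finite geometric series in $D_k$ (which terminates since $D_k$ is nilpotent), and applying Kato's formula \cite[I,(5.50)]{Ka}, one concludes that $\varphi(\rP)$ preserves the decomposition (\ref{directsum}) and that its restriction to $V_k$ has the unique eigenvalue $\varphi(\lambda_k^{1/2})$ with algebraic multiplicity $\dim V_k = m(\lambda_k)$. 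Summing over $k$ and invoking Lidskii's theorem produces the desired identity. This last point has essentially been sketched in the paragraph preceding the statement.

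The one genuinely nontrivial step is the third: showing that the holomorphic functional calculus respects the generalized Jordan structure of the non-self-adjoint operator $P$. This is the principal place where non-self-adjointness enters, and it leans on the fact that the projections $\Pi_k$ commute with $(P-\lambda\I)^{-1}$ and hence with $\varphi(\rP)$. Convergence of the resulting eigenvalue sum is not an issue: the Weyl law of Lemma \ref{counting} gives $|\lambda_k|=O(k^{2/n})$ with the $m(\lambda_k)$ absorbed into the counting, while the hypothesis $\widehat\varphi\in C_c^\infty(\R)$ ensures that $\varphi$ decays rapidly on strips $|\Im(\lambda)|<\delta$, so that $\sum_k m(\lambda_k)|\varphi(\lambda_k^{1/2})|<\infty$.
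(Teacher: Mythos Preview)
Your proposal is correct and follows essentially the same route as the paper: smoothing via Lemma~\ref{smoothing}, the trace identity from Proposition~\ref{trace}, Lidskii's theorem, and the identification of the eigenvalues of $\varphi(\rP)$ on each root space $V_k$ via Kato's formula \cite[I,(5.50)]{Ka}. Your remark on absolute convergence through Lemma~\ref{counting} also matches the paper's observation immediately following the proposition.
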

By Lemma \ref{counting}, the series on the left hand side is absolutely 
convergent. 

\noindent
{\bf Remark.} Recall that $\rP=\rP_\theta$ depends on
the choice of an Agmon angle $\theta$, and so do the eigenvalues
$\lambda_k^{1/2}=(\lambda_k)_\theta^{1/2}$. Let $0<\theta<\theta^\prime<2\pi$
be two Agmon angles. Then it follows from Lemma \ref{spec1} that there are
only finitely many eigenvalues $\lambda_{1},...,\lambda_{m}$ of $P$ which 
are contained in $\Lambda_{[\theta,\theta^\prime]}$. Therefore for $\lambda\in
\spec(P)$ we have
\[
(\lambda)_{\theta^{\prime}}^{1/2}=\begin{cases}\hskip9pt (\lambda)_\theta^{1/2},& 
\mathrm{if}\;\lambda\not\in\{\lambda_1,...,\lambda_m\};\\
-(\lambda)_\theta^{1/2},& \mathrm{if}\;\lambda\in\{\lambda_1,...,\lambda_m\}.
\end{cases}
\]
Since $\varphi$ is even $\varphi\left((\lambda)_\theta^{1/2}\right)$ is 
independent of $\theta$. This justifies the notation on the left hand side
of (\ref{trace1a}).
\hfill$\square$

\section{The kernel and the wave equation}\label{kernwave}
\setcounter{equation}{0}

In this section we give a description of the kernel $K_\varphi$ of the 
smoothing operator $\varphi(\rP)$
in terms of the solution of the wave equation. Consider the wave equation
\begin{equation}\label{waveequ}
\frac{\partial^2 u}{\partial t^2}+P u=0,\quad u(0,x)=f(x),\; 
u_t(0,x)=0.
\end{equation}
\begin{prop}\label{uniqueness}
For each $f\in C^\infty(X,E)$ there is a unique solution $u(t;f)\in 
C^\infty(\R\times X,E)$ 
of the wave equation (\ref{waveequ}) with initial condition $f$. 
Moreover for every $T>0$ and $s\in\R$ there exists $C>0$ such that for
every $f\in C^\infty(X,E)$
\begin{equation}\label{bound1}
\parallel u(t,f)\parallel_s\le C\parallel u(0,f)\parallel_s,\quad |t|\le T,
\end{equation} 
where $\parallel\cdot\parallel_s$ denotes the $s$-Sobolev norm.
\end{prop}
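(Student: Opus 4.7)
My plan is to deduce Proposition \ref{uniqueness} from the standard well-posedness of strictly hyperbolic equations on closed manifolds.

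\textbf{Strict hyperbolicity.} The leading symbol of $L=\partial_t^2+P$ at $(t,x;\tau,\xi)\in T^*(\R\times X)$ equals $(-\tau^2+\|\xi\|_x^2)\operatorname{Id}_{E_x}$ by (\ref{symbol1}). Its roots in $\tau$, namely $\tau=\pm\|\xi\|_x$, are real and simple for $\xi\neq 0$. Hence $L$ is strictly hyperbolic in the sense of \cite[Ch.~IV, \S 3]{Ta1}, and the general theory (existence, uniqueness, finite propagation speed, energy estimates) directly yields, for every $f\in C^\infty(X,E)$, a unique $u\in C^\infty(\R\times X,E)$ solving (\ref{waveequ}) together with the bound (\ref{bound1}).

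\textbf{Self-contained sketch.} For transparency I would rewrite (\ref{waveequ}) as a first-order system $U_t=AU$ with $U=(u,u_t)^T$ and
\[
A=\begin{pmatrix}0&I\\-P&0\end{pmatrix}=A_0+R,\qquad A_0=\begin{pmatrix}0&I\\-\Delta_E&0\end{pmatrix},\qquad R=\begin{pmatrix}0&0\\-(P-\Delta_E)&0\end{pmatrix},
\]
where $\Delta_E=\rD^2$ is the self-adjoint reference Laplacian of (\ref{sqroot}). Since $\Delta_E$ is self-adjoint and nonnegative, its functional calculus produces a $C^0$-group $e^{tA_0}$ on $H^{s+1}(X,E)\oplus H^s(X,E)$, uniformly bounded on $|t|\le T$ (explicitly via $\cos(t\Delta_E^{1/2})$ and $\Delta_E^{-1/2}\sin(t\Delta_E^{1/2})$, the latter of operator norm $\le|t|$; the finite-dimensional zero eigenspace of $\Delta_E$ contributes only a harmless correction). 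Since $P$ and $\Delta_E$ have the same principal symbol, $P-\Delta_E$ is a first-order differential operator, so $R$ is a bounded endomorphism of the same Hilbert space. The Duhamel formula
\[
U(t)=e^{tA_0}U(0)+\int_0^t e^{(t-s)A_0}RU(s)\,ds
\]
is then solved by Picard iteration, and Gronwall's inequality delivers both uniqueness and the bound (\ref{bound1}) on every Sobolev scale. Smoothness of $u$ in $(t,x)$ follows by combining these estimates for all $s\in\R$ with the equation $\partial_t^2 u=-Pu$, which trades $t$-derivatives for $x$-derivatives.

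\textbf{Main obstacle.} The chief technical point is that $P$ is not self-adjoint in general, so $\cos(tP^{1/2})$ cannot be defined directly via the spectral theorem, and the usual energy identity $\frac{d}{dt}(\|u_t\|^2+\langle Pu,u\rangle)=0$ fails. The splitting $P=\Delta_E+(P-\Delta_E)$ is precisely designed to confine the non-self-adjointness to a first-order perturbation of a self-adjoint model, for which the classical energy method applies essentially verbatim.
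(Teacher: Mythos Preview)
Your proposal is correct. The paper also reduces to a first-order system, but with a different normalization: it sets $u_1=\Lambda u$, $u_2=u_t$ with $\Lambda=(\Delta_E+\Id)^{1/2}$, so that the system matrix
\[
L=\begin{pmatrix}0&\Lambda\\-P\Lambda^{-1}&0\end{pmatrix}
\]
is a genuine first-order pseudo-differential operator on $L^2\oplus L^2$; it then checks that $L+L^*$ has order zero (because $P\Lambda^{-1}=\Lambda+B_1$ with $B_1$ of order~$0$), so the system is \emph{symmetric hyperbolic} in the sense of \cite[Ch.~IV, \S 2]{Ta1}, and invokes that theory together with Gronwall. Your self-contained sketch instead keeps the naive pair $(u,u_t)$, works on the graded space $H^{s+1}\oplus H^s$, and treats $P-\Delta_E$ as a bounded perturbation of the self-adjoint wave generator via Duhamel and Gronwall. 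Both routes exploit the same decomposition $P=\Delta_E+(\text{first order})$; the paper packages it as ``symmetric hyperbolic $\Psi$DO system,'' while you package it as ``bounded perturbation of a $C^0$-group.'' Your version is arguably more elementary (no $\Psi$DO calculus beyond the observation that $P-\Delta_E$ has order $\le 1$), whereas the paper's normalization via $\Lambda$ keeps everything on a single $L^2$-scale and plugs directly into Taylor's off-the-shelf theorem.
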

\begin{proof}
We proceed in the same way as in  \cite[Chapt. IV, \S\S 1,2]{Ta1} and replace 
(\ref{waveequ}) by a first oder system. Let $\Delta_E$ be the Bochner-Laplace
operator associated to the connection $\nabla^E$ in $E$. Put
$\Lambda=(\Delta_E+\Id)^{1/2}$ and
\begin{equation}\label{order1}
L:=\begin{pmatrix} 0&\Lambda\\ -P\Lambda^{-1}&0\end{pmatrix}\colon 
\begin{matrix} C^\infty(X,E)\\\oplus\\C^\infty(X,E)\end{matrix}
\longrightarrow \begin{matrix}C^\infty(X,E)\\\oplus\\C^\infty(X,E)\end{matrix}.
\end{equation}
Then $L$ is a pseudo-differential operator of order 1. Let $u$ be a solution of
(\ref{waveequ}). Put 
\[
u_1=\Lambda u,\quad u_2=\frac{\partial}{\partial t}u.
\] 
Then $(u_1,u_2)$ satisfies
\begin{equation}\label{system}
\frac{\partial}{\partial t}\begin{pmatrix}u_1\\ u_2\end{pmatrix}=
L\begin{pmatrix}u_1\\ u_2\end{pmatrix},\quad u_1(0)=\Lambda f,\; u_2(0)=0.
\end{equation}
On the other hand, let $(u_1,u_2)$ be a solution of the initial value problem
(\ref{system}). Put $u=\Lambda^{-1}u_1$. Then $u$ is a solution of 
(\ref{waveequ}). Thus it suffices to consider (\ref{system}). 
By (\ref{symbol1}) it follows that $P=\Delta_E+D$ where $D$ is a differential
operator of order $\le 1$. Therefore we get
\[
P\Lambda^{-1}=(\Delta_E+\Id)\Lambda^{-1}+(D-\Id)\Lambda^{-1}=\Lambda+B_1,
\] 
where $B_1$ is a pseudo-dfifferential operator of order 0. Therefore
\[
L+L^*=\begin{pmatrix}0&\Lambda\\-\Lambda-B_1&0\end{pmatrix}+
\begin{pmatrix}0&-\Lambda-B_1^*\\\Lambda &0\end{pmatrix}
=-\begin{pmatrix}0&B_1^*\\ B_1&0\end{pmatrix},
\]
is a pseudo-differential operator of order zero. Hence (\ref{system}) is a
symmetric hyperbolic system in the sense of \cite[Chapt. IV, \S 2]{Ta1}. So we
can proceed as in the proof of Theorem 2.3 in \cite[Chapt. IV, \S 2]{Ta1}
to establish existence and uniqueness of solutions of (\ref{waveequ}).
The estimation (\ref{bound1})
follows from the proof using Cronwall's inequality.
\end{proof}

\begin{prop}\label{represent}
Let $\varphi\in {\mathcal S}(\R)$ be even such that $\widehat\varphi
\in C^\infty_c(\R)$. Then for every $f\in C^\infty(X,E)$ we have
\[
\varphi(\rP)f=\frac{1}{\sqrt{2\pi}}
\int_\R\widehat\varphi(t)u(t;f)\;dt.
\]
\end{prop}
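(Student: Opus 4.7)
The right-hand side is well-defined: Proposition~\ref{uniqueness} ensures that $t\mapsto u(t;f)$ is smooth in $t$ with values in $C^\infty(X,E)$, and $\widehat\varphi$ has compact support, so the Bochner integral converges. The strategy is to identify the wave propagator $f\mapsto u(t;f)$ with the operator $\cos(t\rP)$ defined by the holomorphic functional calculus of Section~\ref{funccal}, and then apply the Fourier inversion
\[
\varphi(\lambda)=\frac{1}{\sqrt{2\pi}}\int_\R\widehat\varphi(t)\cos(t\lambda)\,dt,
\]
which is valid because $\widehat\varphi$ is even and compactly supported.

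\textbf{Identifying the propagator.} Set
\[
w(t;f):=\frac{i}{2\pi}\int_\Gamma \cos(t\lambda)(\rP-\lambda)^{-1}f\,d\lambda,
\]
with $\Gamma=\Gamma_{b,d}$ as in~(\ref{integral1}). I would show $w(t;f)=u(t;f)$ by matching initial data and the wave equation. The initial data match: $w(0;f)=\frac{i}{2\pi}\int_\Gamma(\rP-\lambda)^{-1}f\,d\lambda=f$ by the Riesz projector identity, since $\Gamma$ encloses $\spec(\rP)$ by Lemma~\ref{spec2}; and $\partial_t w(0;f)=0$ since $\sin(0)=0$. For the wave equation, the algebraic identity $(P-\lambda^2)(\rP-\lambda)^{-1}=\rP+\lambda$ reduces $(\partial_t^2+P)w(t;f)$ to a contour integral of $\cos(t\lambda)(\rP+\lambda)f$, which vanishes by Cauchy's theorem after suitable deformation (the integrand is entire in $\lambda$). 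Proposition~\ref{uniqueness} then forces $w(t;f)=u(t;f)$. Inserting the Fourier expansion of $\varphi(\lambda)$ into~(\ref{integral1}) and interchanging the $\lambda$- and $t$-integrations yields
\[
\varphi(\rP)f=\frac{1}{\sqrt{2\pi}}\int_\R\widehat\varphi(t)\,w(t;f)\,dt=\frac{1}{\sqrt{2\pi}}\int_\R\widehat\varphi(t)\,u(t;f)\,dt.
\]

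\textbf{Main obstacle.} The delicate issue is convergence. On $\Gamma$ the integrand $\cos(t\lambda)(\rP-\lambda)^{-1}$ is only $O(1)$ in operator norm, since $|\cos(t\lambda)|\le\cosh(tb)$ on the horizontal parts and $\|R_\lambda(\rP)\|\le 2/|\Im\lambda|$ by~(\ref{resolv3}), so neither the inner integral defining $w(t;f)$ nor the double integral arising from Fubini is absolutely convergent in the obvious way. My preferred route to make this rigorous is to exploit the spectral decomposition~(\ref{directsum}): on each finite-dimensional $\rP$-invariant root subspace $V_k$, with eigenvalue $\mu_k=\lambda_k^{1/2}$ and nilpotent part $N_k=\rP|_{V_k}-\mu_k\Id$, uniqueness of the finite-dimensional ODE forces $u(t;f)=\cos(t(\mu_k\Id+N_k))f$ for $f\in V_k$, and the elementary identity $\varphi^{(m)}(\mu)=(2\pi)^{-1/2}\int_\R\widehat\varphi(t)\,t^m\cos^{(m)}(t\mu)\,dt$ combined with the Taylor expansion of $\cos$ at $\mu_k\Id$ along $N_k$ yields $(2\pi)^{-1/2}\int\widehat\varphi(t)u(t;f)\,dt=\varphi(\rP)f$ on $V_k$. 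Extension to arbitrary $f\in C^\infty(X,E)$ follows from density of the algebraic direct sum $\bigoplus_k V_k$ in $L^2(X,E)$, together with the $L^2$-boundedness of both operators (the wave propagator by Proposition~\ref{uniqueness} at $s=0$, and $\varphi(\rP)$ by Lemma~\ref{smoothing}).
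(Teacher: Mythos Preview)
Your ``preferred route'' via the root-space decomposition~(\ref{directsum}) is correct and complete, but it is not how the paper proceeds. The paper fixes exactly the convergence obstacle you identified by inserting a Gaussian regularizer: for $\sigma>0$ and a suitable $c\ge 0$ it sets
\[
u(t;\sigma,f):=\frac{i}{2\pi}\int_\Gamma\cos(t\lambda)\,e^{-\sigma(\lambda^2+c)}(\rP-\lambda)^{-1}f\,d\lambda,
\]
which now converges absolutely thanks to the factor $e^{-\sigma\lambda^2}$. A direct computation shows that $u(t;\sigma,f)$ solves the wave equation with initial datum $e^{-\sigma(P+c)}f$, so the Sobolev bound~(\ref{bound1}) gives $u(t;\sigma,f)\to u(t;f)$ as $\sigma\to 0$. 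Fubini is now legitimate in the $(t,\lambda)$-double integral, yielding
\[
\frac{1}{\sqrt{2\pi}}\int_\R\widehat\varphi(t)\,u(t;\sigma,f)\,dt=\frac{i}{2\pi}\int_\Gamma\varphi(\lambda)\,e^{-\sigma(\lambda^2+c)}(\rP-\lambda)^{-1}f\,d\lambda,
\]
and one lets $\sigma\to 0$ on both sides.

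The trade-off is this: your argument is essentially algebraic on each finite-dimensional $V_k$ and then extends by $L^2$-density, but it leans on the completeness of root vectors~(\ref{directsum}), which is a nontrivial input from \cite{Mk}. The paper's regularization argument is purely analytic and would go through even without knowing that the root vectors span $L^2$; it only uses the resolvent estimate~(\ref{resolv3}) and the a priori bound~(\ref{bound1}). Your approach is arguably more elementary once~(\ref{directsum}) is in hand, while the paper's is more self-contained and more robust for situations where completeness of generalized eigenvectors might fail. One small remark on your write-up: the uniqueness you invoke for $u(t;f)=\cos(t(\mu_k\Id+N_k))f$ on $V_k$ is really the PDE uniqueness of Proposition~\ref{uniqueness} (applied to the $V_k$-valued candidate), not merely finite-dimensional ODE uniqueness.
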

\begin{proof}
Let $\Gamma\subset \C$ be as in (\ref{integral1}). Let $c\ge 0$ be such that 
the spectrum of
$P+c$ is contained in $\Re(z)>0$. For $\sigma>0$ define the operator
$\cos(t\rP)e^{-\sigma(P+c)}$ by the functional integral
\[
\cos(t\rP)e^{-\sigma(P+c)}=\frac{i}{2\pi}
\int_\Gamma \cos(t\lambda) e^{-\sigma(\lambda^2+c)}(\rP-\lambda)^{-1}\, 
d\lambda.
\]
By (\ref{resolv3}) the integral is absolutely convergent.
For $f\in C^\infty(X,E)$ and $\sigma>0$ put
\begin{equation}\label{solution}
u(t;\sigma,f):=\cos(t\rP)e^{-\sigma(P+c)}f.
\end{equation}
Then $u(t;\sigma,f)$ satisfies
\[
\begin{split}
\left(\frac{\partial^2}{\partial t^2}+P\right)u(t;\sigma,f)&=\frac{i}{2\pi}
\int_\Gamma \cos(t\lambda) e^{-\sigma(\lambda^2+c)}
(P-\lambda^2)(\rP-\lambda)^{-1}\,d\lambda\\
&=\frac{i}{2\pi}
\int_\Gamma \cos(t\lambda) e^{-\sigma(\lambda^2+c)}(\rP+\lambda)\; d\lambda=0.
\end{split}
\]
and $u(0;\sigma,f)=e^{-\sigma(P+c)}f$. Thus $u(t;\sigma,f)$ is the unique 
solution of (\ref{waveequ}) with initial condition $e^{-\sigma(P+c)}f$. Then
$u(t;f)-u(t;\sigma,f)$ is the solution of (\ref{waveequ}) with initial 
condition $f-e^{-(P+c)}f$. Hence by (\ref{bound1}) we get for all $s\in\R$
\begin{equation}\label{bound2}
\parallel u(t;f)-u(t;\sigma,f)\parallel_{H^s}\le C\parallel f-e^{-\sigma(P+c)}f
\parallel_{H^s},\quad |t|\le T.
\end{equation}
 Now note that for every $f\in C^\infty(X,E)$ we have
\[
\lim_{\sigma\to 0} \parallel e^{-\sigma(P+c)}f-f\parallel=0.
\]
This follows from the parametrix construction. Hence we get
\[
\begin{split}
\parallel f-e^{-\sigma(P+c)}f \parallel_{H^s}&=\;\parallel (P+c)^{s/2}f- 
e^{-\sigma(P+c)}(P+c)^{s/2}f\parallel_{L^2}\to 0
\end{split}
\]
as $\sigma\to 0$. Combined with (\ref{bound2}) we get
\begin{equation}\label{limit1}
\lim_{\sigma\to 0}\parallel u(t;f)-u(t;\sigma,f)\parallel_{H^s}=0.
\end{equation}
Furthermore we have
\[
\begin{split}
\frac{1}{\sqrt{2\pi}}\int_\R\widehat\varphi(t)u(t;\sigma,f)\;dt&
=\frac{1}{\sqrt{2\pi}}\int_\R\widehat\varphi(t)
\frac{i}{2\pi}
\int_\Gamma \cos(t\lambda)e^{-\sigma(\lambda^2+c)}
(\rP-\lambda)^{-1}f\,d\lambda\;dt\\
&=\frac{i}{2\pi}
\int_\Gamma\left(\frac{1}{\sqrt{2\pi}}\int_\R\widehat\varphi(t)
\cos(t\lambda)\;dt\right)
e^{-\sigma(\lambda^2+c)}(\rP-\lambda)^{-1}f\,d\lambda\\
&=\frac{i}{2\pi}\int_\Gamma\varphi(\lambda)e^{-\sigma(\lambda^2+c)}
(\rP-\lambda)^{-1}f\,d\lambda.
\end{split}
\]
For $\sigma\to 0$, the right hand side converges to $\varphi(\rP)f$. By
(\ref{limit1}) the left hand side converges to $(2\pi)^{-1/2}\int_\R 
\widehat\varphi(t)u(t;f)\;dt.$ 
\end{proof}

Let $p\colon \widetilde X\to X$ be the universal covering of $X$, $\widetilde
E=p^*E$, and 
$\widetilde P\colon C^\infty(\widetilde X,\widetilde E)\to 
C^\infty(\widetilde X,\widetilde E)$ the lift of $P$ to $\widetilde X$. Let
$\widetilde u(t,\tilde x,f)$ and $\widetilde f$ be the pull back 
 to $\widetilde X$ of $u(t,x;f)$ and $f$, respectively. 
Then $\widetilde u(t,f)$ satisfies
\begin{equation}\label{waveequ1}
\left(\frac{\partial^2 }{\partial t^2}+\widetilde P\right)\widetilde 
u(t;f)=0,\quad \widetilde u(0;f)=\widetilde f,\; \widetilde u_t(0,f)=0.
\end{equation}
By (\ref{symbol1}) we have  $\widetilde P=\widetilde \Delta_E+\widetilde D$,
where $\widetilde D$ is a differential operator of order $\le 1$. Then it
follows from energy estimates as in \cite[Chapt. 2, \S 8]{Ta2} that solutions 
of $(\frac{\partial^2 }{\partial t^2}+\widetilde P)u=0$  have finite 
propagation speed. This implies
that for every $\psi\in C^\infty(\widetilde X,\widetilde E)$ the
wave equation
\[
\left(\frac{\partial^2 }{\partial t^2}+\widetilde P\right)u(t;\psi)=0,\quad 
u(0;\psi)=\psi,\;u_t(0;\psi)=0,
\]
has a unique solution. Hence we get
\begin{equation}\label{covering}
\widetilde u(t;f)=u(t,\widetilde f).
\end{equation}
Let $d(x,y)$ denote the geodesic distance of $x,y\in\widetilde X$. 
For $\delta>0$ let
\[
U_\delta=\{(x,y)\in \widetilde X\times \widetilde X\colon d(x,y)<\delta\}.
\]
\begin{prop}\label{covering1}
There exist $\delta>0$ and $H_\varphi\in C^\infty(\widetilde X\times
\widetilde X,\Hom(\widetilde E,\widetilde E))$ with $\supp 
H_\varphi\subset U_\delta$ such that for all $\psi\in C^\infty(
\widetilde X,\widetilde E)$ we have
\[
\frac{1}{\sqrt{2\pi}}\int_\R \widehat\varphi(t)u(t,\widetilde x;\psi)\; dt=
\int_{\widetilde X}H_\varphi(\widetilde x,\widetilde y)(
\psi(\widetilde y))\;d\widetilde y.
\]
\end{prop}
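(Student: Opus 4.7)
Let $T:=\sup\{|t|:t\in\supp\widehat\varphi\}$ and fix any $\delta>T$. I plan to analyze the operator
\[
\widetilde A\psi(\widetilde x):=\tfrac{1}{\sqrt{2\pi}}\int_\R \widehat\varphi(t)\,u(t,\widetilde x;\psi)\,dt
\]
on $\widetilde X$ and show that (i) by finite propagation speed its Schwartz kernel is supported in $\overline{U_T}\subset U_\delta$, and (ii) by the commutator trick used in Lemma \ref{smoothing}, suitably localized, that kernel is smooth. The support part is immediate from finite propagation speed, already asserted in the paragraph preceding the proposition: $u(t,\widetilde x;\psi)=0$ whenever $d(\widetilde x,\supp\psi)>|t|$, and $\supp\widehat\varphi\subset[-T,T]$, so $\widetilde A\psi(\widetilde x)=0$ as soon as $d(\widetilde x,\supp\psi)>T$.

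For smoothness I would argue as in Lemma \ref{smoothing}. Two applications of the uniqueness in Proposition \ref{uniqueness} (both sides solve the same Cauchy problem) yield $u(t;\widetilde P\psi)=-\partial_t^2u(t;\psi)$; iterating and integrating by parts $2(k+\ell)$ times in $t$ gives, for every $k,\ell\ge 0$,
\[
\widetilde P^k\,\widetilde A\,\widetilde P^\ell\psi(\widetilde x)=\tfrac{(-1)^{k+\ell}}{\sqrt{2\pi}}\int_\R \partial_t^{2(k+\ell)}\widehat\varphi(t)\,u(t,\widetilde x;\psi)\,dt.
\]
The kernel $\partial_t^{2(k+\ell)}\widehat\varphi$ is still in $C_c^\infty(\R)$ with the same support, and the energy estimate (\ref{bound1}) carries over to $\widetilde X$ after multiplying $\psi$ by a cutoff equal to $1$ on $B_T(K)$ (finite propagation speed makes this cutoff invisible): one obtains $\|u(t,\cdot;\psi)\|_{L^2(K)}\le C\|\psi\|_{L^2(K')}$ whenever $K'\supset\overline{B_T(K)}$ and $|t|\le T$. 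Hence $\widetilde P^k\widetilde A\widetilde P^\ell$ is continuous $L^2_c\to L^2_{\text{loc}}$ for every $k,\ell\ge 0$; ellipticity of $\widetilde P$ then upgrades this to continuity $\widetilde A:H^{-s}_c\to H^r_{\text{loc}}$ for all $s,r$, and the Schwartz kernel theorem yields a smooth kernel $H_\varphi$, supported in $\overline{U_T}\subset U_\delta$.

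The main obstacle is that on the non-compact cover $\widetilde X$ the contour-integral formula for $\varphi(\widetilde P^{1/2})$ used in Lemma \ref{smoothing} is no longer available (the spectrum of $\widetilde P$ is typically not discrete and the resolvent bounds are delicate). The wave-equation approach is the local substitute, and finite propagation speed is precisely what allows every required bound to be reduced to a compactly supported energy estimate. Finally, to extend the integral representation from $\psi\in C_c^\infty$ to an arbitrary $\psi\in C^\infty(\widetilde X,\widetilde E)$, pick a cutoff $\eta\in C_c^\infty(\widetilde X)$ with $\eta\equiv 1$ on $B_T(\widetilde x)$; both sides of the claimed identity at $\widetilde x$ are unchanged when $\psi$ is replaced by $\eta\psi$ --- the left-hand side by finite propagation speed, the right-hand side because $H_\varphi(\widetilde x,\cdot)$ is supported in $B_T(\widetilde x)$.
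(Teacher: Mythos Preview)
Your argument is correct, but the route differs from the paper's. The paper does not redo the smoothing argument on $\widetilde X$; instead it uses finite propagation speed to reduce \emph{entirely} to the compact case already treated: given a relatively compact open $V\subset\widetilde X$, a cutoff $\chi\in C_c^\infty(V_{2T})$ with $\chi\equiv 1$ on $V_T$ makes $u(t,\cdot;\psi)|_V=u(t,\cdot;\chi\psi)|_V$ for $|t|\le T$, and since $\chi\psi$ has compact support one may transplant the data to a closed manifold and invoke Lemma~\ref{smoothing} and Proposition~\ref{represent} verbatim to obtain a smooth kernel on $V$. Your approach avoids the transplantation by proving the smoothing property directly on $\widetilde X$: you replace the contour-integral identity of Lemma~\ref{smoothing} (unavailable on the cover, as you correctly note) by its time-domain analog obtained through integration by parts in $t$, and then use local energy estimates plus local elliptic regularity. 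This is more self-contained and makes the role of finite propagation speed explicit at each step; the cost is that the ``ellipticity upgrades'' step on a non-compact manifold needs a properly supported parametrix for $\widetilde P^\ell$ to extend $\widetilde A$ to negative-order data, a point worth spelling out. One minor correction: the uniqueness you invoke for $u(t;\widetilde P\psi)=\widetilde P\,u(t;\psi)$ is the uniqueness on $\widetilde X$ established (via finite propagation speed) in the paragraph preceding the proposition, not Proposition~\ref{uniqueness} itself, which is stated only for the compact base $X$.
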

\begin{proof}
Suppose that $\supp\widehat\varphi\subset [-T,T]$. Let $V\subset \widetilde X$
be an open relatively compact subset. For $r>0$ let
\[
V_r=\{y\in V\colon d(y,V)<r\}.
\]
Let $\chi\in C^\infty_c(V_{2T})$ such that $\chi=1$ on $V_T$. By finite 
propagation speed, we have
\[
u(t,\widetilde x;\psi)=u(t,\widetilde x;\chi\psi),\quad \widetilde x\in V,
\;|t|<T,\]
for all $\psi\in C^\infty(\widetilde X,\widetilde E)$. Thus we are
 reduced to the case of a compact manifold and the proof follows from
Lemma \ref{smoothing} and Proposition \ref{represent}.
\end{proof}
Using Proposition \ref{covering1} together with (\ref{covering}) and 
Proposition \ref{represent}, we obtain
\begin{equation}\label{covering2}
\varphi(\rP)f(\widetilde x)=\int_{\widetilde X} 
H_\varphi(\widetilde x,\widetilde y)
(\widetilde f(\widetilde y))\;d\widetilde y
\end{equation}
for all $f\in C^\infty(X,E)$. Let $F\subset \widetilde X$ be a fundamental
domain for $\Gamma$. Given $\gamma\in\Gamma$, let $R_\gamma \colon \widetilde E
\to \widetilde E$ be the induced bundle map. Thus for each $\widetilde y\in
\widetilde X$, we have a linear isomorphism  
$R_{\gamma}\colon \widetilde 
E_{\widetilde y}\to \widetilde E_{\gamma\widetilde y}$. Note that $\widetilde f$
satisfies
\[
\widetilde f(\gamma\widetilde y)=R_\gamma(\widetilde f(\widetilde y)),
\quad \gamma\in\Gamma.
\]
Then we get
\[
\begin{split}
\int_{\widetilde X} H_\varphi(\widetilde x,\widetilde y)(
\widetilde f(\widetilde y))\;d\widetilde y&=
\sum_{\gamma\in\Gamma}\int_{\gamma F}
H_\varphi(\widetilde x,\widetilde y)(\widetilde f(\widetilde y))
\;d\widetilde y
\\
&=\sum_{\gamma\in\Gamma}\int_{F}
H_\varphi(\widetilde x,\gamma\widetilde y)(\widetilde 
f(\gamma\widetilde y))\;d\widetilde y\\
&=\int_{F}\left(\sum_{\gamma\in\Gamma}
H_\varphi(\widetilde x,\gamma\widetilde y)\circ R_\gamma\right)
(\widetilde f(\widetilde y))\;d\widetilde y.
\end{split}
\]
Combining this expression with (\ref{covering2}), it follows that the kernel 
$K_\varphi$ of $\varphi(\rP)$ is given by
\begin{equation}\label{kernel}
K_\varphi( x, y)=\sum_{\gamma\in\Gamma}
H_\varphi(\widetilde x,\gamma\widetilde y)\circ R_\gamma,
\end{equation}
where $\widetilde x$ and $\widetilde y$ are any lifts of $x$ and $y$ to the
fundamental domain $F$. So by Proposition \ref{trace1} we get
\begin{prop}\label{trace2}
Let $\varphi\in\cS(\R)$ be even and suppose that 
$\widehat\varphi\in C^\infty_c(\R)$. Then we have
\begin{equation*}
\sum_{\lambda\in\spec(P)}m(\lambda)\varphi(\lambda^{1/2})=\sum_{\gamma\in\Gamma}
\int_F\tr(H_\varphi(\widetilde x,\gamma\widetilde x)\circ R_\gamma)\;
d\widetilde x.
\end{equation*}
\end{prop}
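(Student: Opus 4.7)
The plan is to obtain Proposition~\ref{trace2} by combining the two principal ingredients already established: the trace identity of Proposition~\ref{trace1} and the kernel formula (\ref{kernel}) coming from the covering argument.

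First I would invoke Proposition~\ref{trace1}, which, since $\varphi$ is even and $\widehat\varphi\in C^\infty_c(\R)$, gives
\[
\sum_{\lambda\in\spec(P)}m(\lambda)\varphi(\lambda^{1/2})=\int_X\tr K_\varphi(x,x)\,dx,
\]
where $K_\varphi$ is the smooth kernel of $\varphi(\rP)$ (smoothness being guaranteed by Lemma~\ref{smoothing}). Next I would identify $X$ with the fundamental domain $F\subset\widetilde X$ and substitute the expression for $K_\varphi$ on the diagonal coming from (\ref{kernel}), namely
\[
K_\varphi(x,x)=\sum_{\gamma\in\Gamma}H_\varphi(\widetilde x,\gamma\widetilde x)\circ R_\gamma,
\]
where $\widetilde x\in F$ is any lift of $x$.

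To finish, I would interchange the sum over $\Gamma$ with the integral over $F$ to obtain
\[
\sum_{\lambda\in\spec(P)}m(\lambda)\varphi(\lambda^{1/2})=\sum_{\gamma\in\Gamma}\int_F\tr\bigl(H_\varphi(\widetilde x,\gamma\widetilde x)\circ R_\gamma\bigr)\,d\widetilde x.
\]
The justification for the interchange is the crucial observation that, by Proposition~\ref{covering1}, $\supp H_\varphi\subset U_\delta$ for some $\delta>0$. Hence for each $\widetilde x\in F$ only those $\gamma\in\Gamma$ with $d(\widetilde x,\gamma\widetilde x)<\delta$ can contribute, and since $\Gamma$ acts properly discontinuously and $F$ is relatively compact (as $X$ is compact), only finitely many $\gamma$ contribute uniformly in $\widetilde x\in F$; the sum defining $K_\varphi(x,x)$ is therefore finite, and the interchange with the integral is trivially legitimate.

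The only delicate point is this last justification; once the finite-propagation-speed mechanism of Proposition~\ref{covering1} is in hand, the remainder of the argument is a direct substitution. I do not foresee any real obstacle, since both the trace identity and the kernel decomposition have already been proved in full.
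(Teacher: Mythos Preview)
Your proposal is correct and follows exactly the paper's approach: the paper simply states ``So by Proposition~\ref{trace1} we get'' the result, implicitly using the kernel formula~(\ref{kernel}), and afterwards remarks ``Note that the sum on the right is finite.'' Your argument spells out the same two ingredients and supplies the justification for the interchange that the paper leaves to the reader.
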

Note that the sum on the right is finite.

\section{The twisted Bochner-Laplace operator}\label{bochlaplace}
\setcounter{equation}{0}

Let $E\to X$ be a complex vector bundle with covariant derivative $\nabla$.
Define the invariant second covariant derivative $\nabla^2$ by
\[
\nabla^2_{U,V}=\nabla_U\nabla_V-\nabla_{\nabla_UV},
\]
where $U,V$ are any two vector fields on $X$. Then the connection Laplacian
$\Delta^\#$ is defined by
\[
\Delta^\#=-\Tr(\nabla^2).
\]
Let $(e_1,...,e_n)$ be a local frame field. Then
\[
\Delta^\#=-\sum_j\nabla^2_{e_j,e_j}.
\]
This formula implies that the principal symbol of $\Delta^\#$ is given by
\[
\sigma(\Delta^\#)(x,\xi)=\parallel \xi\parallel^2_x\Id_{E_x}.
\]
Thus the results of the previous section can be applied to $\Delta^\#$.

Assume that $E$ is equipped with a Hermitian fiber metric and $\nabla$
is compatible with the metric. Then it follows that
\begin{equation}\label{bochlapl}
\nabla^*\nabla=-\Tr\nabla^2,
\end{equation}
\cite[p.154]{LM}, i.e., the connection Laplacian equals the Bochner-Laplace operator
$\Delta_E=\nabla^*\nabla$. 

Now let $\rho\colon \pi_1(X)\to \GL(V)$ be a finite-dimensional complex
representation of $\pi_1(X)$. Let $F\to X$ be the associated flat vector
bundle with connection $\nabla^F$. Let $E$ be a Hermitian vector bundle 
with Hermitian  connection $\nabla^E$. We equip $E\otimes F$ with the
product connection $\nabla^{E\otimes F}$, which is defined by
\[
\nabla^{E\otimes F}_Y=\nabla^E_Y\otimes 1+ 1\otimes \nabla_Y^F, 
\]
for $Y\in C^\infty(X,TX)$. Let $\Delta_{E,\rho}^\#$ be the connection Laplacian
associated to $\nabla^{E\otimes F}$. Locally it can be described as follows.
Let $U\subset X$ be an open subset such that $F|_U$ is trivial. Then 
$(E\otimes F)|_U$ is isomorphic to the direct sum of $m=\rk(F)$ copies of 
$E|_U$:
\[
(E\otimes F)|_U\cong \oplus_{i=1}^m E|_U.
\]
 Let
$e_1,...,e_m$ be a basis of flat sections of $F|_U$. Then each 
$\varphi\in C^\infty(U,(E\otimes F)|_U)$ can be written as
\[
\varphi=\sum_{j=1}^m\varphi_j\otimes e_j,
\]
where $\varphi_i\in C^\infty(U,E|_U)$, $i=1,...,m$. Then 
\[
\nabla^{E\otimes F}_Y(\varphi)=\sum_j(\nabla^E_Y\varphi_j)\otimes e_j.
\]
Let $\Delta_E=(\nabla^E)^*\nabla^E$ be the Bochner-Laplace operator associated
to $\nabla^E$. Using (\ref{bochlapl}), we get 
\begin{equation}\label{twistlapl}
\Delta_{E,\rho}^\#\varphi=\sum_j(\Delta_E\varphi_j)\otimes e_j.
\end{equation}
Let $\widetilde E$ and $\widetilde F$ be the pullback to $\widetilde X$
of $E$ and $F$, respectively. Then $\widetilde F\cong \widetilde X\times V$ and
\[
C^\infty(\widetilde X,\widetilde E\otimes \widetilde F)\cong 
C^\infty(\widetilde X,\widetilde E)\otimes V.
\]
It follows from (\ref{twistlapl}) that with respect to this isomorphism, the
lift $\widetilde \Delta_{E,\rho}^\#$ of $\Delta_{E,\rho}^\#$ to $\widetilde X$ 
takes the form
\[
\widetilde \Delta_{E,\rho}^\#=\widetilde\Delta_E\otimes \Id,
\]
where $\widetilde\Delta_E$ is the lift of $\Delta_E$ to $\widetilde X$. Let 
$\psi\in C^\infty_c(\widetilde X,\widetilde E)\otimes V$. 
Then the unique 
solution of the wave equation 
\[
\left(\frac{\partial^2}{\partial t^2}+\widetilde \Delta^\#_{E,\rho}\right)
u(t;\psi)=0,\quad u(0;\psi)=\psi,\; u_t(0,\psi)=0,
\]
is given by
\[
u(t;\psi)=\left(\cos\left(t(\widetilde \Delta_E)^{1/2}\right)
\otimes\Id\right)\psi.
\]  
Let $\varphi$ be as above and let $k_\varphi(\widetilde x,\widetilde y)$ 
be the kernel of 
\[
\varphi\left((\widetilde \Delta_E)^{1/2}\right)
=\frac{1}{\sqrt{2\pi}}\int_\R \widehat \varphi(t)
\cos \left(t(\widetilde \Delta_E)^{1/2}\right)\;dt.
\]
Then the kernel $H_\varphi$ of Proposition \ref{covering1} is given by
$H_\varphi(\widetilde x,\widetilde y)=k_\varphi(\widetilde x,\widetilde y)
\otimes \Id$. Let $R_{\gamma}\colon \widetilde 
E_{\widetilde y}\to \widetilde E_{\gamma\widetilde y}$ be the canonical
isomorphism. Then it follows from (\ref{kernel}) that the kernel of the
 operator $\varphi\bigl((\Delta^\#_{E,\rho})^{1/2}\bigr)$ is given by
\begin{equation}\label{kernel3}
K_\varphi(x,y)=\sum_{\gamma\in\Gamma} k_\varphi(\widetilde x,\gamma\widetilde y)
\circ(R_\gamma\otimes\rho(\gamma)).
\end{equation}
Combined with (\ref{trace2}) we get
\begin{prop}\label{trace3} Let $F_\rho$ be a flat vector bundle over $X$,
associated to a finite-dimensional complex representation 
$\rho\colon \pi_1(X)\to \GL(V)$. Let $\Delta^\#_{E,\rho}$ be the 
twisted connection Laplacian acting
in $C^\infty(X,E\otimes F_\rho)$. Let $\varphi\in\mathcal{S}(\R)$ be even with 
$\widehat \varphi\in C^\infty_c(\R)$ and denote by 
$k_\varphi(\widetilde x,\widetilde y)$ the kernel of 
$\varphi\left((\widetilde\Delta_{E})^{1/2}\right)$. Then we have
\begin{equation}
\sum_{\lambda\in\spec(\Delta^\#_{E,\rho})} m(\lambda)\varphi(\lambda^{1/2})=
\sum_{\gamma\in\Gamma}\tr\rho(\gamma)
\int_F \tr \left(k_\varphi(\widetilde x,\gamma\widetilde x)
\circ R_\gamma\right)\; d\widetilde x.
\end{equation}
\end{prop}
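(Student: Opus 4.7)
The plan is to apply Proposition \ref{trace2} directly to $P = \Delta^\#_{E,\rho}$ acting on $C^\infty(X, E \otimes F_\rho)$, and then to unpack the resulting right-hand side using the tensor-product structure of the lifted operator that was established earlier in this section.

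First I would verify that $\Delta^\#_{E,\rho}$ satisfies the standing hypothesis (\ref{symbol1}). Since $\Delta^\#_{E,\rho} = -\Tr(\nabla^{E\otimes F_\rho})^2$, the general computation at the start of this section gives $\sigma(\Delta^\#_{E,\rho})(x,\xi) = \|\xi\|^2_x \cdot \Id_{(E\otimes F_\rho)_x}$. Thus the functional calculus of Section \ref{funccal} applies, $\varphi((\Delta^\#_{E,\rho})^{1/2})$ is a smoothing operator of trace class, and Proposition \ref{trace2} yields
\begin{equation*}
\sum_{\lambda\in\spec(\Delta^\#_{E,\rho})} m(\lambda)\varphi(\lambda^{1/2}) = \sum_{\gamma\in\Gamma} \int_F \tr\bigl(H^{E,\rho}_\varphi(\widetilde x,\gamma\widetilde x)\circ R^{E,\rho}_\gamma\bigr)\, d\widetilde x,
\end{equation*}
where $H^{E,\rho}_\varphi$ is the kernel supplied by Proposition \ref{covering1} for the lifted operator $\widetilde{\Delta^\#_{E,\rho}}$ and $R^{E,\rho}_\gamma$ is the induced deck-transformation action on $\widetilde{E\otimes F_\rho}$.

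Next I would identify these two ingredients in terms of the data appearing on the right-hand side of the target formula. The discussion leading to (\ref{kernel3}) gives $\widetilde{\Delta^\#_{E,\rho}} = \widetilde\Delta_E \otimes \Id_V$ under the trivialization $\widetilde{E\otimes F_\rho} \cong \widetilde E \otimes V$; consequently the wave equation decouples in the $V$-factor, and Proposition \ref{represent} forces $H^{E,\rho}_\varphi(\widetilde x,\widetilde y) = k_\varphi(\widetilde x,\widetilde y) \otimes \Id_V$. The deck action is $R^{E,\rho}_\gamma = R_\gamma \otimes \rho(\gamma)$, as recorded just before (\ref{kernel3}).

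Finally I would invoke the multiplicativity $\tr(A\otimes B) = \tr(A)\tr(B)$ with $A = k_\varphi(\widetilde x,\gamma\widetilde x)\circ R_\gamma$ acting on $\widetilde E_{\widetilde x}$ and $B = \rho(\gamma)$ acting on $V$, to obtain
\begin{equation*}
\tr\bigl(H^{E,\rho}_\varphi(\widetilde x,\gamma\widetilde x)\circ R^{E,\rho}_\gamma\bigr) = \tr\rho(\gamma) \cdot \tr\bigl(k_\varphi(\widetilde x,\gamma\widetilde x)\circ R_\gamma\bigr).
\end{equation*}
Pulling the scalar $\tr\rho(\gamma)$ out of the integral over $F$ yields the claimed identity, and the finiteness of the $\gamma$-sum is inherited from Proposition \ref{trace2}. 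No genuine obstacle arises here: the entire argument is a bookkeeping exercise once one has matched $H^{E,\rho}_\varphi$ with $k_\varphi \otimes \Id_V$, which is exactly the content encoded in (\ref{kernel3}). The only subtlety worth double-checking is that $\Delta^\#_{E,\rho}$ genuinely falls under the scope of the non-self-adjoint functional calculus of Section \ref{funccal} even though $F_\rho$ need not carry a $\nabla^{F_\rho}$-compatible Hermitian metric, but this follows at once from the Laplace-type principal symbol above.
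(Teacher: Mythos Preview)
Your proposal is correct and follows essentially the same route as the paper: apply Proposition~\ref{trace2} to $P=\Delta^\#_{E,\rho}$, then use the identifications $H_\varphi=k_\varphi\otimes\Id_V$ and $R^{E,\rho}_\gamma=R_\gamma\otimes\rho(\gamma)$ established before (\ref{kernel3}), together with multiplicativity of the trace, to obtain the stated formula. The paper compresses this into the single phrase ``Combined with (\ref{trace2}) we get,'' but your more explicit unpacking is exactly what is intended.
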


\section{Locally symmetric spaces}\label{locsym}
\setcounter{equation}{0}

In this section we specialize to the case where $X$ is a locally symmetric 
manifold. Let $G$ be a
connected semisimple real Lie group of non-compact type with finite center.
Let $K\subset G$ be a maximal compact subgroup of $G$. Denote by $\gf$ and 
$\kf$ the Lie algebras of $G$ and $K$, respectively. Let
\begin{equation}\label{cartan}
\gf=\pg\oplus\kf
\end{equation}
be the Cartan decomposition. Put $S=G/K$. This is a Riemannian symmetric 
space of non-positive curvature. The invariant metric is obtained by 
translation of the restriction of the Killing form to $\pg\cong T_e(G/K)$. 
Let $\Gamma\subset G$ 
be a discrete, torsion free, cocompact subgroup. Then $\Gamma$ acts freely
on $S$ by isometries and $X=\Gamma\bs S$ is a compact locally symmetric 
manifold. 

Let $\tau \colon K\to \GL(V_\tau)$ be a finite-dimensional unitary 
representation of $K$, and let
\[
\widetilde E_\tau=(G\times V_\tau)/K\to G/K
\]
be the associated homogeneous vector bundle, where $K$ acts on the right as
usual by 
\[
(g,v)k=(gk,\tau(k^{-1})v),\quad g\in G,\,k\in K,\; v\in V_\tau.
\]
Let 
\begin{equation}
C^\infty(G;\tau):=\left\{f\colon G\to V_\tau\,\,|\,\, f\in C^\infty,\,
f(gk)=\tau(k^{-1})f(g),\,g\in G,\, k\in K\right\}.
\end{equation}
Similarly, by $C_c^\infty(G;\tau)$ we denote the subspace of 
$C^\infty(G;\tau)$ of compactly supported functions and by $L^2(G;\tau)$
the completion of $C_c^\infty(G;\tau)$ with respect to the inner product
\[
\langle f_1,f_2\rangle=\int_{G/K}\langle f_1(g),f_2(g)\rangle\;d\dot g.
\]
There is a canonical isomorphism
\begin{equation}\label{isomor}
C^\infty(S,\widetilde E_\tau)\cong C^\infty(G;\tau).
\end{equation}
\cite[p.4]{Mi}. Similarly, there are isomorphisms 
$C_c^\infty(S,\widetilde E_\tau)\cong C_c^\infty(G;\tau)$
and $L^2(S,\widetilde E_\tau)\cong L^2(G;\tau)$.

Let $\nabla^\tau$ be the canonical 
$G$-invariant connection
on $\widetilde E_\tau$. It is defined by
\[
\nabla^\tau_{g_\ast Y}f(gK)=\frac{d}{dt}\bigg|_{t=0}\left(g\exp(tY)\right)^{-1}
f(g\exp(tY)K),
\]
where $f\in C^\infty(G;\tau)$ and $Y\in\pg$. Let 
$\widetilde \Delta_\tau$ be the associated Bochner-Laplace operator. 
Then $\widetilde \Delta_\tau$ is  $G$-invariant, 
i.e., $\widetilde \Delta_\tau$ commutes with the right action of $G$ on 
$C^\infty(S,\widetilde E_\tau)$. Let $\Omega\in Z(\gf_\C)$ and $\Omega_K\in
Z(\kf_\C)$ be the Casimir elements of $G$ and $K$, respectively. Assume that
$\tau$ is irreducible. Then with respect to (\ref{isomor}), we have
\begin{equation}\label{casimir}
\widetilde\Delta_\tau=-R(\Omega) +\lambda_\tau\Id,
\end{equation}
where $\lambda_\tau=\tau(\Omega_K)$ is the Casimir eigenvalue of $\tau$ 
\cite[Proposition 1.1]{Mi}. We note that $\lambda_\tau\ge 0$.

Let $\varphi\in\cS(\R)$ be even with $\hat\varphi\in C^\infty_c(\R)$. Then
$\varphi(\widetilde\Delta_\tau^{1/2})$ is a $G$-invariant integral operator.
Therefore its kernel $k_\varphi$ satisfies
\[
k_\varphi(g\widetilde x,g\widetilde y)=k_\varphi(\widetilde x,\widetilde y),
\quad g\in G.
\]
With respect to the isomorphism (\ref{isomor}) it can be identified with a
compactly supported $C^\infty$-function
\[
h_\varphi\colon G\to \End(V_\tau),
\]
which satisfies 
\[
h_\varphi(k_1gk_2)=\tau(k_1)\circ h_\varphi(g)\circ \tau(k_2),\quad k_1,k_2\in K.
\]
Then  $\varphi(\widetilde\Delta_\tau^{1/2})$ acts by convolution
\begin{equation}\label{convolu}
\left(\varphi(\widetilde\Delta_\tau^{1/2})f\right)(g_1)
=\int_G h_\varphi(g_1^{-1}g_2)(f(g_2))\,dg_2.
\end{equation}
Let 
\[
E_\tau=\Gamma\bs \widetilde E_\tau
\]
be the locally homogeneous vector bundle over $\Gamma\bs S$ induced by
$\widetilde E_\tau$.
Let $\chi\colon \Gamma\to \GL(V_\chi)$ be a finite-dimensional complex 
representation and let $F_\chi$ be the associated flat vector bundle over
$\Gamma\bs S$. Let $\Delta^\#_{\tau,\chi}$ be the twisted connection Laplacian
acting in $C^\infty(\Gamma\bs S,E_\tau\otimes F_\chi)$. Then it follows from 
(\ref{kernel3}) that the kernel $K_\varphi$ of 
$\varphi(\widetilde\Delta_\tau^{1/2})$ is given by
\[
K_\varphi(g_1K,g_2K)=\sum_{\gamma\in\Gamma}h_\varphi(g_1^{-1}\gamma g_2)\otimes
\chi(\gamma).
\]

By Proposition \ref{trace3} we get
\begin{equation}
\Tr\varphi\left((\Delta^\#_{\tau,\chi})^{1/2}\right)=\sum_{\gamma\in\Gamma}
\tr\chi(\gamma)\int_{\Gamma\bs G}\tr h_\varphi(g^{-1}\gamma g)\,d\dot g.
\end{equation}
We now  proceed in the usual way, grouping terms together into conjugacy 
classes.
Given $\gamma\in\Gamma$, denote by $\{\gamma\}_\Gamma$, $\Gamma_\gamma$, and
$G_\gamma$ the $\Gamma$-conjugacy class of $\gamma$, the centralizer of $\gamma$
in $\Gamma$, and the centralizer of $\gamma$ in $G$, respectively. With the
conjugacy class $\{e\}_\Gamma$ separated from the others as usual, we get a
first version of the trace formula.
\begin{prop}\label{traceform} 
For all even $\varphi\in\cS(\R)$ with $\widehat\varphi\in C_c^\infty(\R)$ we
have
\begin{equation}\label{traceform1}
\begin{split}
\Tr\varphi\left((\Delta^\#_{\tau,\chi})^{1/2}\right)=&
\dim(V_\chi)\vol(\Gamma\bs S)\tr h_\varphi(e)\\
&+\sum_{\{\gamma\}_\Gamma\not=e}\tr\chi(\gamma)\vol(\Gamma_\gamma\bs G_\gamma)
\int_{G_\gamma\bs G}\tr h_\varphi(g^{-1}\gamma g)\;d\dot g.
\end{split}
\end{equation}
\end{prop}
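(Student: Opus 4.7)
The proof is essentially an application of Proposition \ref{trace3} combined with the standard Selberg-style grouping of $\Gamma$-orbits into conjugacy classes. I will take $E = E_\tau$ and $\rho = \chi$, so that $\Delta^\#_{E,\rho} = \Delta^\#_{\tau,\chi}$, and I will reinterpret the fundamental domain integral appearing in Proposition \ref{trace3} as an integral on $\Gamma \bs G$ via the isomorphism $L^2(S,\widetilde E_\tau) \cong L^2(G;\tau)$.

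The plan is as follows. First, I identify the kernel: under the isomorphism (\ref{isomor}), the $G$-invariant kernel $k_\varphi(\widetilde x,\widetilde y)$ of $\varphi(\widetilde\Delta_\tau^{1/2})$ is represented by the $K$-bi-equivariant function $h_\varphi\colon G \to \End(V_\tau)$ defined via (\ref{convolu}), in the sense that for $\widetilde x = g_1 K$ and $\widetilde y = g_2 K$, the operator $k_\varphi(\widetilde x, \widetilde y)$ corresponds to $h_\varphi(g_1^{-1} g_2)$. Consequently, writing $\widetilde x = gK$, the scalar $\tr\bigl(k_\varphi(\widetilde x, \gamma \widetilde x) \circ R_\gamma\bigr)$ equals $\tr h_\varphi(g^{-1}\gamma g)$, where the trace on the right is taken in $\End(V_\tau)$. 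Applying Proposition \ref{trace3}, I obtain
\begin{equation*}
\Tr\varphi\bigl((\Delta^\#_{\tau,\chi})^{1/2}\bigr) = \sum_{\gamma \in \Gamma} \tr\chi(\gamma) \int_F \tr h_\varphi(g^{-1}\gamma g)\, d\widetilde x,
\end{equation*}
where $F$ is a fundamental domain for $\Gamma$ in $S$.

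Next, I rewrite the integration domain. Since $K$ acts freely on $G$ with quotient $S$, a fundamental domain $F \subset S = G/K$ for $\Gamma$ lifts to a fundamental domain $\widetilde F \subset G$ for $\Gamma$ acting on the left, with $\widetilde F / K = F$; integrating a right-$K$-invariant function on $\widetilde F$ against Haar measure reproduces the integral over $F$ against the invariant measure on $S$, and this is nothing but integration over $\Gamma\bs G$. Since $g \mapsto \tr h_\varphi(g^{-1}\gamma g)$ is right-$K$-invariant (because $h_\varphi$ is $K$-bi-equivariant and the trace is conjugation-invariant), this step converts each summand to $\int_{\Gamma\bs G} \tr h_\varphi(g^{-1}\gamma g)\, d\dot g$.

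Finally, I group by conjugacy classes. For each nontrivial class $\{\gamma\}_\Gamma$ I write the sum over its elements as a sum over $\Gamma/\Gamma_\gamma$; the standard unfolding argument, using the change of variable $g \mapsto \delta^{-1} g$ for $\delta$ ranging through coset representatives, collapses this into a single integral over $\Gamma_\gamma \bs G$, which then unfolds further to $G_\gamma \bs G$ with the constant factor $\vol(\Gamma_\gamma \bs G_\gamma)$. Since $\chi(\delta\gamma\delta^{-1}) = \chi(\delta)\chi(\gamma)\chi(\delta)^{-1}$ and the trace is conjugation-invariant, the factor $\tr\chi(\gamma)$ survives unchanged. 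The identity class contributes $\dim V_\chi \cdot \vol(\Gamma \bs G) \cdot \tr h_\varphi(e) = \dim(V_\chi) \vol(\Gamma\bs S) \tr h_\varphi(e)$, giving the stated formula.

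The one point requiring genuine care is the legitimacy of the regrouping: I must check that the sum on $\Gamma$ in Proposition \ref{trace3} can be reordered by conjugacy classes without losing absolute convergence. This is standard and follows from the compact support of $\widehat\varphi$: by finite propagation speed, $k_\varphi(\widetilde x, \widetilde y)$ vanishes unless $d(\widetilde x, \widetilde y) \le T$ with $T = \sup|\supp\widehat\varphi|$, so for fixed compact $F$ only finitely many $\gamma \in \Gamma$ contribute, and the $\Gamma$-sum is finite. This also confirms, a posteriori, that the right-hand side of (\ref{traceform1}) is a finite sum, so no delicate convergence issue arises.
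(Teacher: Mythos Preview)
Your proof is correct and follows essentially the same approach as the paper: identify the kernel $k_\varphi$ with $h_\varphi$ via the isomorphism (\ref{isomor}), apply Proposition \ref{trace3}, rewrite the fundamental-domain integral as an integral over $\Gamma\bs G$, and then group into conjugacy classes ``in the usual way.'' You have in fact spelled out more detail than the paper does---in particular the unfolding and the finiteness of the $\Gamma$-sum via compact support of $\widehat\varphi$---but the logical skeleton is identical.
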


In order to make this formula more explicit, one needs to express the kernel
$h_\varphi$ in terms of $\varphi$, and to evaluate the orbital 
integrals on the right hand side. The kernel $h_\varphi$ can be determined
using Harish-Chandra's Plancherel formula. The orbital integrals can be 
computed using the Fourier inversion formula. However, both formulae are  
pretty complicated in the higher rank case. A sufficiently explicit
formula can be obtained in the rank one case which we discuss in the next 
section.

\section{The rank one case}\label{rank1}
\setcounter{equation}{0}

Let $G$ and $K$ be as above. We introduce some notation following \cite{Wa}.
Let $G=KAN$ be an Iwasawa decomposition of $G$
(see \cite{He}). Then $A$ is a maximal vector subgroup of $G$ and $N$ is
a maximal unipotent subgroup of $G$.  
In this section we assume that $G$ has split rank one, i.e., $\dim A=1$. 
Let $M$ be the
centralizer of $A$ in $K$. We set $P=MAN$. Then $P$ is a parabolic subgroup of 
$G$. Since $G$ has split rank 1, every proper parabolic subgroup of $G$ is
conjugate to $P$. 

Denote by $\widehat G$ and $\widehat M$ the set of 
equivalence classes of irreducible unitary representations of $G$ and $M$,
respectively. For $\pi\in\widehat G$ we denote by $\H_\pi$ the Hilbert space
in which $\pi$ operates.

Let $\af$ and $\nf$ be the Lie algebras of $A$ and $N$, respectively. Choose
$H\in\af$ such that $\ad(H)|_\nf$ has eigenvalues 1 and possibly 2. Then
$\af=\R H$. For $t\in R$ we set $a_t=\exp(tH)$ and $\log a_t= t$. Let 
$A^+=\{a_t\colon t>0\}$.

Let $\rho$ be the half-sum of positive roots of $(\gf,\af)$. Its norm 
$|\rho|$ with 
respect to the normalized Killing form is given as follows. 
Let $p$ and $q$ be the dimensions of the eigenspaces of $\ad(H)|_\nf$ with 
eigenvalues $1$ and $2$, respectively. Then $p>0$ and $0\le q<p$. Then
\begin{equation}\label{casimir1}
|\rho|=\frac{1}{2}(p+2q).
\end{equation}
For $\sigma\in\widehat M$ and $\lambda\in\R$ let $\pi_{\sigma,\lambda}$ be the
unitarily induced representation from $P$ to $G$ which is defined as in 
\cite[p. 177]{Wa}. Let $\Theta_{\sigma,\lambda}$
denote the character of $\pi_{\sigma,\lambda}$. 

If $\gamma\in\Gamma$, $\gamma\not=e$, then there exists $g\in G$ such that
$g\gamma g^{-1}\in MA^+$. Thus there are $m_\gamma\in M$ and $a_\gamma\in A^+$
 such that $g\gamma g^{-1}=m_\gamma a_\gamma$. By \cite[Lemma 6.6]{Wa}, 
$a_\gamma$ depends only on $\gamma$ and $m_\gamma$ is determined by $\gamma$
up to conjugacy in $M$. Let
\[
l(\gamma)=\log a_\gamma.
\]
Then $l(\gamma)$ is the length of the unique closed geodesic of $\Gamma\bs S$ 
determined by $\{\gamma\}_\Gamma$. Furthermore, by the above remark 
\begin{equation}\label{discr}
D(\gamma):=e^{-l(\gamma)|\rho|}\big|\det\left(\Ad(m_\gamma a_\gamma)|_\nf
-\Id\right)\big|
\end{equation}
is well defined. Let
\[
u(\gamma)=\vol(G_{m_\gamma a_\gamma}/A).
\] 
Let $h\in C_c^\infty(G)$ be $K$-finite. Then by \cite[pp. 177-178]{Wa} 
(correcting a misprint) we have
\begin{equation}\label{orbint1}
\int_{G_\gamma\bs G}h(g\gamma g^{-1})\;d\dot g= \frac{1}{2\pi}
\frac{1}{u(\gamma)D(\gamma)}
\sum_{\sigma\in\widehat M}\overline{\tr\sigma(\gamma)}
\int_\R\Theta_{\sigma,\lambda}(h)\cdot e^{-il(\gamma)\lambda}\;d\lambda.
\end{equation}
Since $h$ is $K$-finite, $\Theta_{\sigma,\lambda}(h)\not=0$ only for finitely
many $\sigma$. Thus the sum over $\sigma\in\widehat M$ is finite.
The volume factors in (\ref{traceform1}) are computed as follows. 
Since $G$ has rank one, $\Gamma_\gamma$ is 
infinite cyclic \cite[Proposition 5.16]{DKV}. Thus there is 
$\gamma_0\in\Gamma_\gamma$ such that $\gamma_0$ generates $\Gamma_\gamma$ and
$\gamma=\gamma_0^{n(\gamma)}$ for some integer $n(\gamma)\ge 1$. Then
\begin{equation}\label{volume}
\frac{\vol(\Gamma_\gamma\bs G_\gamma)}{u(\gamma)}=l(\gamma_0).
\end{equation} 
Inserting (\ref{orbint1}) and (\ref{volume}) into 
(\ref{traceform1}), we get the following form of the trace formula
in the rank one case.
\begin{prop}
Let $\varphi\in\cS(\R)$ be even with $\hat\varphi\in C^\infty_c(\R)$. Then
\begin{equation}\label{traceform2}
\begin{split}
\Tr\varphi\left((\Delta^\#_{\tau,\chi})^{1/2}\right)=&
\dim(V_\chi)\vol(\Gamma\bs S)\tr h_\varphi(e)\\
&+\sum_{\{\gamma\}_\Gamma\not=e}\tr\chi(\gamma)\frac{1}{2\pi}
\frac{l(\gamma_0)}{D(\gamma)}
\sum_{\sigma\in\widehat M}\overline{\tr\sigma(\gamma)}
\int_\R\Theta_{\sigma,\lambda}(h_\varphi)\cdot e^{-il(\gamma)\lambda}\;d\lambda.
\end{split}
\end{equation}
\end{prop}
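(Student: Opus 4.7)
The plan is to derive (\ref{traceform2}) by substituting the rank-one orbital integral identity (\ref{orbint1}) and the volume identity (\ref{volume}) into the trace formula (\ref{traceform1}) of Proposition \ref{traceform}. The identity contribution $\dim(V_\chi)\vol(\Gamma\bs S)\tr h_\varphi(e)$ already has the desired form, so all the work lies in rewriting each hyperbolic term
\[
\vol(\Gamma_\gamma\bs G_\gamma)\int_{G_\gamma\bs G}\tr h_\varphi(g^{-1}\gamma g)\,d\dot g.
\]

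First I would apply Wallach's formula (\ref{orbint1}) to the scalar function $h(g)=\tr h_\varphi(g)$. The hypotheses must be checked: $h$ is compactly supported because $h_\varphi$ is, and $h$ is $K$-finite because $h_\varphi$ is $(\tau,\tau)$-spherical with $\tau$ irreducible. Indeed
\[
\tr h_\varphi(k_1 g k_2)=\tr\bigl(h_\varphi(g)\,\tau(k_2 k_1)\bigr),
\]
which as a function of $(k_1,k_2)\in K\times K$ is a matrix coefficient of $\tau\otimes\tau^\ast$ and spans a finite-dimensional two-sided translation-invariant subspace. Using the standard $g\mapsto g^{-1}$ bijection between $G_\gamma\bs G$ and $G/G_\gamma$ to match the convention of (\ref{orbint1}), we obtain
\[
\int_{G_\gamma\bs G}\tr h_\varphi(g^{-1}\gamma g)\,d\dot g=\frac{1}{2\pi u(\gamma)D(\gamma)}\sum_{\sigma\in\widehat M}\overline{\tr\sigma(\gamma)}\int_\R\Theta_{\sigma,\lambda}(h_\varphi)\,e^{-il(\gamma)\lambda}\,d\lambda,
\]
where $\Theta_{\sigma,\lambda}(h_\varphi)$ is interpreted as $\Theta_{\sigma,\lambda}(\tr h_\varphi)$ and the $\sigma$-sum is finite by $K$-finiteness.

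Finally I would apply (\ref{volume}) to replace $\vol(\Gamma_\gamma\bs G_\gamma)$ by $u(\gamma)\,l(\gamma_0)$; the factor $u(\gamma)$ then cancels against the denominator in the orbital integral expression, leaving the prefactor $l(\gamma_0)/(2\pi D(\gamma))$. Summing over $\{\gamma\}_\Gamma\neq e$ with the weight $\tr\chi(\gamma)$ yields precisely the hyperbolic side of (\ref{traceform2}). The most delicate point is the $K$-finiteness verification for $\tr h_\varphi$ so that (\ref{orbint1}) genuinely applies, but this is really a bookkeeping matter rather than a serious obstacle, since all the substantive analytic work has already been done in Proposition \ref{traceform} and in Wallach's derivation of (\ref{orbint1}).
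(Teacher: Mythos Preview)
Your proposal is correct and follows essentially the same route as the paper: the paper's proof consists precisely of inserting the orbital integral identity (\ref{orbint1}) and the volume relation (\ref{volume}) into Proposition \ref{traceform}. Your explicit verification that $\tr h_\varphi$ is $K$-finite (via the $(\tau,\tau)$-spherical property) is a detail the paper leaves implicit, but otherwise the arguments coincide.
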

The right hand side is still not in an explicite form. First of all we can
use the Plancherel formula \cite{Kn} to express $\tr h_\varphi(e)$ 
in terms of characters. In this way we are reduced  to the computation of the
characters $\Theta_\pi$, $\pi\in\widehat G$, evaluated on $\tr h_\varphi$. 
This is our next goal.

For simplicity we assume that $K$ is multiplicity free in $G$, i.e., for
each $\tau\in\widehat K$ and $\pi\in \widehat G$, we have $[\pi|_K:\tau]\le 1$.
By \cite{Ko}  this condition is satisfied for $G=SO_0(n,1)$ and $G=SU(n,1)$.
Let 
\[
\widehat G(\tau)=\left\{\pi\in\widehat G\colon [\pi|_K:\tau]=1\right\}.
\]
Then for each $\pi\in\widehat G(\tau)$ we can identify the $\tau$-isotypical
subspace $\H_\pi(\tau)$ of $\tau$ in $\H_\pi$ with $V_\tau$. Let $P_\tau$ be 
the orthogonal 
projection of $\H_\pi$ onto  $\H_\pi(\tau)$. Define the
$\tau$-spherical function $\Phi^\pi_\tau$ on $G$ by
\[
\Phi^\pi_\tau(g):=P_\tau\pi(g) P_\tau,\quad g\in G.
\]
Then $\Phi^\pi_\tau$ is a $C^\infty$-map
\[
\Phi^\pi_\tau\colon G\to \End(V_\tau)
\]
which satisfies
\begin{equation}\label{spherical3}
\begin{aligned}
\Phi^\pi_\tau(g)^*&=\Phi^\pi_\tau(g^{-1})\\
\Phi^\pi_\tau(k_1gk_2)&=\tau(k_1)\Phi^\pi_\tau(g)\tau(k_2), \quad g\in 
G,\; k_1,k_2\in K.
\end{aligned}
\end{equation}
Let $v\in V_\tau$ and set 
\[
f^\pi_{\tau,v}(g)=\Phi^\pi_\tau(g^{-1})(v).
\]
\begin{equation}
\end{equation}
Then $f^\pi_{\tau,v}\in C^\infty(G;\tau)$ and it follows from (\ref{casimir})
that 
\begin{equation}\label{spherical4}
\widetilde\Delta_\tau f^\pi_{\tau,v}=(-\pi(\Omega)+\lambda_\tau)f^\pi_{\tau,v}.
\end{equation}
Let $u(t,x;f^\pi_{\tau,v})$ be the unique solution of
\[
\left(\frac{\partial^2}{\partial t^2}+\widetilde\Delta_\tau\right)u(t)=0,\quad 
u(0)=f^\pi_{\tau,v},\;u_t(0)=0.
\]
\begin{lem}\label{spherical2}
For $t\in\R$, $\tau\in\widehat K$ and $\pi\in\widehat G(\tau)$, we have
$-\pi(\Omega)+\lambda_\tau\ge 0$ and
\[
u(t,x;f^\pi_{\tau,v})=\cos\left(t\sqrt{-\pi(\Omega)+\lambda_\tau}\right)
f^\pi_{\tau,v}(x).
\]
\end{lem}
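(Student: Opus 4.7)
The plan is to first establish the non-negativity of $\mu := -\pi(\Omega)+\lambda_\tau$, and then verify the explicit cosine formula by direct computation plus uniqueness.

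For non-negativity, I would exploit the Cartan decomposition $\gf=\pg\oplus\kf$ and the unitarity of $\pi$. Choose a basis $\{X_i\}$ of $\pg$ orthonormal for the Killing form $B$ and a basis $\{Y_j\}$ of $\kf$ orthonormal for $-B$. Then the standard computation of the Casimir gives $\Omega=\sum_i X_i^2-\sum_j Y_j^2=\sum_i X_i^2+\Omega_K$. Since $\pi$ is unitary, $\pi(X)^\ast=-\pi(X)$ for $X\in\gf$, so $\pi(X_i)^2=-\pi(X_i)^\ast\pi(X_i)\le 0$ as operators on $\H_\pi$, whence $\pi\bigl(\sum_i X_i^2\bigr)\le 0$. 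Because $\Omega$ is central and $\pi$ is irreducible, $\pi(\Omega)$ is a scalar; restricting to $V_\tau\subset\H_\pi$ (where $\pi(\Omega_K)$ acts by $\lambda_\tau$) yields
\[
\pi(\Omega)\,\Id_{V_\tau}=P_\tau\pi\Bigl(\sum_i X_i^2\Bigr)P_\tau+\lambda_\tau\,\Id_{V_\tau}\le \lambda_\tau\,\Id_{V_\tau},
\]
so $\mu=\lambda_\tau-\pi(\Omega)\ge 0$.

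Having $\mu\ge 0$, the function $w(t,x):=\cos\bigl(t\sqrt{\mu}\bigr)f^\pi_{\tau,v}(x)$ is well defined and smooth in $t$ and $x$. Using (\ref{spherical4}) together with $\partial_t^2\cos(t\sqrt{\mu})=-\mu\cos(t\sqrt{\mu})$, I compute
\[
\Bigl(\frac{\partial^2}{\partial t^2}+\widetilde\Delta_\tau\Bigr)w(t,x)=\bigl(-\mu+\mu\bigr)\cos\bigl(t\sqrt{\mu}\bigr)f^\pi_{\tau,v}(x)=0,
\]
and the initial conditions $w(0,x)=f^\pi_{\tau,v}(x)$, $\partial_t w(0,x)=0$ are immediate. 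Therefore $w$ is a smooth solution of the same Cauchy problem as $u(t,x;f^\pi_{\tau,v})$.

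To conclude equality, I invoke uniqueness of solutions of the wave equation on $S$, which was recorded in section \ref{kernwave} as a consequence of energy estimates and finite propagation speed for the strictly hyperbolic operator $\partial_t^2+\widetilde\Delta_\tau$ (since $\widetilde\Delta_\tau$ has scalar principal symbol $\|\xi\|^2\Id$). Even though $f^\pi_{\tau,v}$ need not be compactly supported, finite propagation speed permits localisation to a compact neighbourhood of any given $(t,x)$, which reduces uniqueness on $S$ to uniqueness in the compactly supported setting. Hence $u(t,x;f^\pi_{\tau,v})=w(t,x)$, as claimed.

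The main obstacle is the non-negativity claim: it is the place where unitarity of $\pi$ enters crucially, through the skew-Hermiticity of $\pi(X)$ for $X\in\gf$. Everything else reduces to the eigenfunction identity (\ref{spherical4}) and the uniqueness theorem for the wave equation, both of which are already in hand.
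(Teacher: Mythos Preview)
Your proof is correct. The non-negativity argument is essentially identical to the paper's: both use the Cartan decomposition $\Omega=\sum_i X_i^2+\Omega_K$, unitarity of $\pi$ to get $\pi(X_i)^2\le 0$, and evaluate on a vector in $\H_\pi(\tau)$ where $\pi(\Omega_K)$ acts by $\lambda_\tau$.

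For the second part the two arguments diverge slightly in organisation. You write down the candidate $w(t,x)=\cos(t\sqrt{\mu})f^\pi_{\tau,v}(x)$, check directly via (\ref{spherical4}) that it solves the Cauchy problem, and then invoke the PDE uniqueness on $S$ established in section~\ref{kernwave} (finite propagation speed plus energy estimates, valid for arbitrary smooth data). The paper instead works forward from $u$: it localises with a cutoff, uses the $L^2$ functional calculus $\cos\bigl(t(\widetilde\Delta_\tau)^{1/2}\bigr)$ to show that $\widetilde\Delta_\tau$ commutes with the solution operator, deduces $\widetilde\Delta_\tau u(t,\cdot;f^\pi_{\tau,v})=u(t,\cdot;\widetilde\Delta_\tau f^\pi_{\tau,v})=\mu\,u(t,\cdot;f^\pi_{\tau,v})$, and hence reduces to the scalar ODE $u''+\mu u=0$ in $t$, whose uniqueness is elementary. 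Your route is shorter and avoids the cutoff-and-functional-calculus detour; the paper's route has the minor advantage that the uniqueness step is for an ODE rather than a PDE, but since the PDE uniqueness on $S$ is already recorded earlier, this is not a real saving.
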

\begin{proof} Let $\langle\cdot,\cdot\rangle$ be the Killing form on $\gf$. Its
restriction to $\pg$ (resp. $\kf$) is positive (resp. negative) definite.
Let $X_1,...,X_d\in\pg$ and $Y_1,...,Y_m\in\kf$ be bases of $\pg$ and $\kf$, 
respectively, such that $\langle X_i,X_j\rangle =\delta_{ij}$, $\langle Y_i,Y_j
\rangle =-\delta_{ij}$. Then $\Omega=\sum_i X_i^2-\sum_j Y_j^2$ and $\Omega_K=
-\sum_jY_j^2$.  Let $v\in
\H_\pi(\tau)$, $\parallel v\parallel=1$. Then we get
\[
-\pi(\Omega)+\lambda_\tau=
-\langle\pi(\Omega)v,v\rangle+\lambda_\tau =\sum_i\parallel\pi(X_i)v\parallel^2
\ge 0,
\]
which proves the first statement. For the second statement, we note that 
by definition, we have
\begin{equation}\label{partial}
\frac{\partial^2}{\partial t^2}u(t,x;f^\pi_{\tau,v})=-\widetilde\Delta_\tau 
u(t,x;f^\pi_{\tau,v}).
\end{equation}
Fix $x_0\in S$. Let $\chi\in C^\infty_c(S)$ be such that
\[
\chi(y)=\begin{cases}1, & y\in B_{2t}(x);\\0,& y\in S\setminus B_{3t}(x).
\end{cases}
\]
Then by finite propagation speed we have 
\[
u(t,x;f^\pi_{\tau,v})=u(t,x;\chi f^\pi_{\tau,v}),\quad x\in B_t(x_0).
\]
Since $\chi f^\pi_{\tau,v}\in C_c^\infty(S,\widetilde E_\tau)$, we have
\[
u(t,x;f^\pi_{\tau,v})=\left(\cos\left(t(\widetilde\Delta_\tau)^{1/2}\right)
(\chi f^\pi_{\tau,v} )\right)(x),\quad x\in B_t(x_0).
\]
Using that $\widetilde\Delta_\tau$ commutes with 
$\cos\left(t(\widetilde\Delta_\tau)^{1/2}\right)$, and 
finite propagation speed, we get
\[
\widetilde\Delta_\tau u(t,x;f^\pi_{\tau,v})
=u(t,x;\widetilde\Delta_\tau f^\pi_{\tau,v}).
\]
By (\ref{spherical4}) it follows that
\[
u(t,x;\widetilde\Delta_\tau f^\pi_{\tau,v})
=-(-\pi(\Omega)+\lambda_\tau) u(t,x;f^\pi_{\tau,v}).
\]
Combined with (\ref{partial}) it follows that for every $x\in S$,
$u(t,x;f^\pi_{\tau,v})$ satisfies the following differential equation in $t$
\[
\left(\frac{d^2}{d t^2}-\pi(\Omega)+\lambda_\tau\right)u(t,x;f^\pi_{\tau,v})=0,
\quad u(0,x;f^\pi_{\tau,v})=
f^\pi_{\tau,v}(x),\; u_t(0,x;\phi_\lambda)=0.
\]
This implies the claimed equality.
\end{proof}

Let $\varphi\in\cS(\R)$ be even with $\widehat\varphi\in C^\infty_c(\R)$. Since
the kernel of the integral operator $\varphi((\widetilde\Delta_\tau)^{1/2})$
is given by $h_\varphi\in C^\infty_c(G)$, 
$\varphi((\widetilde\Delta_\tau)^{1/2})(f^\pi_{\tau,v})$ is well defined and it 
follows from Lemma \ref{spherical2} that
\[
\begin{split}
\varphi((\widetilde\Delta_\tau)^{1/2})(f^\pi_{\tau,v})=\frac{1}{\sqrt{2\pi}}
\int_\R\widehat\varphi(t)u(t;f^\pi_{\tau,v})\;dt &= \frac{1}{\sqrt{2\pi}}
\int_\R\widehat\varphi(t)\cos(t\sqrt{-\pi(\Omega)+\lambda_\tau})
f^\pi_{\tau,v}\;dt\\
&=\varphi\left(\sqrt{-\pi(\Omega)+\lambda_\tau}\right) f^\pi_{\tau,v}.
\end{split}
\]
If we rewrite this equality in terms of the kernel $h_\varphi$ and use the 
definition of $f^\pi_{\tau,v}$, we get
\begin{equation}\label{spherical5}
\int_G h_\varphi(g^{-1}g_1)\Phi^\pi_\tau(g^{-1}_1)v\;dg_1=
\varphi\left(\sqrt{-\pi(\Omega)+\lambda_\tau}\right)\Phi^\pi_\tau(g^{-1})v.
\end{equation}
Let $d_\tau:=\dim V_\tau$. Putting $g=1$ and taking the trace of both sides,
we get
\[
\int_G \Tr[h_\varphi(g)\Phi^\pi_\tau(g^{-1})]\;dg=d_\tau
\varphi\left(\sqrt{-\pi(\Omega)+\lambda_\tau}\right).
\]
We continue by rewriting the left hand side. To this end put
\[
\phi^\pi_\tau(g):=\tr\Phi^\pi_\tau(g),\quad g\in G.
\]
Note that $\phi^\pi_\tau$ satisfies $\phi^\pi_\tau(g)=\phi^\pi_\tau(g^{-1})$.
Using the Schur orthogonality relations (see \cite[Chapt. I, \S 5]{Kn}), we get
\begin{equation}\label{spherical6}
\begin{split}
\Phi^\pi_\tau(g)=d_\tau \int_K\Tr[\tau(k^{-1})\Phi^\pi_\tau(g)]\tau(k)\;dk
=d_\tau\int_K \phi^\pi_\tau(k^{-1}g)\tau(k)\;dk.
\end{split}
\end{equation}
Using (\ref{spherical6}), we get
\begin{equation}\label{spherical7}
\begin{split}
\int_G \Tr[h_\varphi(g)\Phi^\pi_\tau(g^{-1})]\;dg&=d_\tau\int_G\int_K
\phi^{\pi}_\tau(k^{-1}g^{-1})\Tr[h_\varphi(g)\tau(k)]\;dk dg\\
&=d_\tau\int_K\int_G \phi^{\pi}_\tau(gk)\tr h_\varphi(gk)\;dg dk\\
&=d_\tau\int_G\tr h_\varphi(g)\phi^\pi_\tau(g)\;dg.
\end{split}
\end{equation}
Together with (\ref{spherical5}) we obtain
\begin{equation}\label{spherical8}
\int_G\tr h_\varphi(g)\phi^\pi_\tau(g)\,dg=
\varphi\left(\sqrt{-\pi(\Omega)+\lambda_\tau}\right).
\end{equation}
Now let $\tau^\prime\in\widehat K$ be any other representation which occurs in
$\pi|_K$. Repeating the argument used in (\ref{spherical7}), we get
\[
\int_G\tr h_\varphi(g)\phi^\pi_{\tau^\prime}(g)\,dg=
\int_G\Tr\left[\left(\int_K\phi^\pi_{\tau^\prime}(k^{-1}g^{-1})\tau(k)\;dk\right) 
h_\varphi(g)\right]\;dg.
\]
Again by the Schur orthogonality relations, we have
\[
\int_K\phi^\pi_{\tau^\prime}(k^{-1}g^{-1})\tau(k)\;dk=0,
\]
if $\tau^\prime\not\cong\tau$. Hence we get
\begin{equation}\label{spherical9}
\int_G\tr h_\varphi(g)\phi^\pi_{\tau^\prime}(g)\:dg=0, \quad \tau^\prime\in
\widehat K,\; \tau^\prime\not\cong\tau.
\end{equation}
Choose an orthonormal basis of $\H_\pi$ which is adapted to the decomposition
of $\pi|_K$ into irreducible representations of $K$. Then it follows from
(\ref{spherical9}) that
\begin{equation}
\begin{split}
\Theta_\pi(\tr h_\varphi)&=\Tr\left[\int_G \tr h_\varphi(g)\pi(g)\;dg\right]
=\sum_{\tau^\prime}\int_G\tr h_\varphi(g)
\phi^\pi_{\tau^\prime}(g)\;dg\\
&=\int_G\tr h_\varphi(g)\phi^\pi_\tau(g)\;dg.
\end{split}
\end{equation}
Combined with (\ref{spherical8}) we obtain the following lemma.
\begin{prop}\label{charact3}
Let $\varphi\in\cS(\R)$ be even with $\hat\varphi\in C^\infty_c(\R)$. Let 
$h_\varphi$ be the kernel of 
$\varphi((\widetilde\Delta_\tau)^{1/2})$. Then
for all $\pi\in\widehat G(\tau)$ we have
\[
\Theta_\pi(\tr h_\varphi)=\varphi\left(\sqrt{-\pi(\Omega)+\lambda_\tau}\right).
\]
\end{prop}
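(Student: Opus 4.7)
The strategy is to test the operator on a well-chosen family of eigenfunctions—the matrix coefficients $f^\pi_{\tau,v}$ attached to the $\tau$-spherical function of $\pi$—and then extract the scalar identity via Schur orthogonality. The two main ingredients are the wave-equation representation of $\varphi((\widetilde\Delta_\tau)^{1/2})$ from Proposition \ref{represent}, together with the reproducing formula for $\Phi^\pi_\tau$ in terms of $\phi^\pi_\tau$ and a decomposition of the character $\Theta_\pi$ adapted to the $K$-isotypes of $\pi$.

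First I would fix $v \in V_\tau$ and observe that $f^\pi_{\tau,v}(g) := \Phi^\pi_\tau(g^{-1})v$ lies in $C^\infty(G;\tau)$ and, by \eqref{casimir}, satisfies $\widetilde\Delta_\tau f^\pi_{\tau,v} = (-\pi(\Omega)+\lambda_\tau)\, f^\pi_{\tau,v}$ with nonnegative eigenvalue (a short computation against a unit vector in $\H_\pi(\tau)$). Lemma \ref{spherical2} then identifies the corresponding wave solution with $\cos(t\sqrt{-\pi(\Omega)+\lambda_\tau})\, f^\pi_{\tau,v}$. Inserting this into the functional-calculus formula of Proposition \ref{represent} gives the eigenvalue identity $\varphi((\widetilde\Delta_\tau)^{1/2})\, f^\pi_{\tau,v} = \varphi(\sqrt{-\pi(\Omega)+\lambda_\tau})\, f^\pi_{\tau,v}$.

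From here I would rewrite the left-hand side as convolution with $h_\varphi$ via \eqref{convolu}, evaluate at $g=e$, and take the ordinary trace on $V_\tau$ to obtain the scalar identity $\int_G \Tr[h_\varphi(g)\Phi^\pi_\tau(g^{-1})]\,dg = d_\tau\, \varphi(\sqrt{-\pi(\Omega)+\lambda_\tau})$. To collapse $\Phi^\pi_\tau$ down to the scalar $\phi^\pi_\tau$, I would invoke the Schur reproducing formula $\Phi^\pi_\tau(g) = d_\tau \int_K \phi^\pi_\tau(k^{-1}g)\tau(k)\,dk$, together with the $K$-bi-equivariance of $h_\varphi$ and invariance of Haar measure; a change of variables $g\mapsto gk$ reduces the integral to $d_\tau\int_G \tr h_\varphi(g)\phi^\pi_\tau(g)\,dg$.

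The main obstacle, and the step where the multiplicity-free hypothesis genuinely enters, is identifying this last integral with $\Theta_\pi(\tr h_\varphi)$. Choosing an orthonormal basis of $\H_\pi$ adapted to the decomposition of $\pi|_K$, the character splits as a sum $\sum_{\tau'\in\widehat K}\int_G \tr h_\varphi(g)\phi^\pi_{\tau'}(g)\,dg$, and I would need to kill all contributions with $\tau'\not\cong\tau$. This is handled by the same averaging trick: inserting $h_\varphi(gk) = h_\varphi(g)\tau(k)$ and averaging over $K$ reduces the unwanted terms to integrals of $\int_K \phi^\pi_{\tau'}(k^{-1}g^{-1})\tau(k)\,dk$, which vanish by Schur orthogonality between inequivalent irreducible representations of $K$. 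The multiplicity-free assumption makes the $\tau$-isotype of $\pi|_K$ unambiguous and avoids further bookkeeping in the surviving term.
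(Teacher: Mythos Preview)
Your proposal is correct and follows essentially the same route as the paper: apply the operator to the $\tau$-spherical eigenfunctions $f^\pi_{\tau,v}$ via Lemma~\ref{spherical2}, evaluate at the identity and take the trace on $V_\tau$, reduce $\Phi^\pi_\tau$ to $\phi^\pi_\tau$ by the Schur reproducing formula, and then decompose $\Theta_\pi(\tr h_\varphi)$ along $K$-isotypes, killing the $\tau'\not\cong\tau$ terms by Schur orthogonality. The only cosmetic remark is that Proposition~\ref{represent} is stated for a closed manifold, whereas here you are working on the noncompact symmetric space $S$; the paper sidesteps this by noting that $h_\varphi\in C_c^\infty(G)$ so the convolution with $f^\pi_{\tau,v}$ is well defined, and then the identity $\frac{1}{\sqrt{2\pi}}\int_\R\widehat\varphi(t)u(t;f^\pi_{\tau,v})\,dt=\varphi(\sqrt{-\pi(\Omega)+\lambda_\tau})f^\pi_{\tau,v}$ follows directly from Lemma~\ref{spherical2} and the Fourier inversion of $\cos$, without needing the full functional-calculus machinery.
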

Since $G$ has split rank one, the tempered dual of $G$ (which is the support of
the Plancherel measure) is the union of the unitarily induced representations 
$\pi_{\sigma,\lambda}$, $\sigma\in\widehat M$, $\lambda\in\R$, 
 and the discrete series, where the latter exists only if $\rank G=\rank K$. 
First consider the induced representation $\pi_{\sigma,\lambda}$. Let 
$T\subset M$
be a maximal torus and $\tf$ the Lie algebra of $T$. Let $\Lambda_\sigma
\in i\tf$ be the infinitesimal character of $\sigma\in\widehat M$ and $\rho_M$
the half-sum of positive roots of $(M,T)$. Then by \cite[Proposition 8.22]{Kn}
\begin{equation}\label{casimir2}
\pi_{\sigma,\lambda}(\Omega)=-\lambda^2-|\rho|^2+|\Lambda_\sigma+\rho_M|^2
-|\rho_M|^2,
\end{equation}
where $|\rho|$ is given by (\ref{casimir1}).
Let $\tau\in \widehat K$. By Frobenius reciprocity \cite[p.208]{Kn} we have
\begin{equation}\label{multipl}
[\pi_{\sigma,\lambda}|_K:\tau]=[\tau|_M:\sigma],\quad \sigma\in\widehat M.
\end{equation}
Since we are assuming that $K$ is multiplicity free in $G$, it follows that
$[\tau|_K:\sigma]\le 1$. Let
\[
\widehat M(\tau)=\{\sigma\in\widehat M\colon [\tau|_M:\sigma]=1\}.
\]
Then by (\ref{multipl}) it follows that $\pi_{\sigma,\lambda}\in 
\widehat G(\tau)$ if and only if $\sigma\in\widehat M(\tau)$, and by
 Proposition \ref{charact3} we get
\begin{equation}\label{charact4}
\Theta_{\sigma,\lambda}(\tr h_\varphi)=\varphi\left(\sqrt{\lambda^2+|\rho|^2+
|\rho_M|^2-
|\Lambda_\sigma+\rho_M|^2+\lambda_\tau}\right),\quad \sigma\in \hat M(\tau),\;
\lambda\in\R.
\end{equation}
 Now suppose that $\rank G=\rank K$. Then $G$ has a non-empty discrete series.
Let $H\subset G$ be a compact Cartan subgroup with Lie algebra $\hf$. Let
$L\subset i\hf$ be the lattice of all $\mu\in i\hf$ such that $\xi_\mu(\exp Y)=
e^{\mu(Y)}$, $Y\in \hf_\C$ exists. Let $L^\prime\subset L$ be the subset of 
regular elements. According
to Harish-Chandra the discrete series of $G$ is parametrized by $L^\prime$, 
i.e., for each $\mu\in L^\prime$ there is a discrete series representation
$\pi_\mu$. Moreover $\pi_\mu\cong\pi_{\mu^\prime}$ iff there exists $w\in W$
such that $\mu=w\mu^\prime$, and each discrete series
representation is of the form $\pi_\mu$ for some $\mu\in L^\prime$. Then by
\cite[(6.8)]{Ar} we have
\begin{equation}
\pi_\mu(\Omega)=|\mu+\rho|^2-|\rho|^2,\quad \mu\in L^\prime.
\end{equation}
So Proposition \ref{charact3} gives in this case
\begin{equation}\label{charact5}
\Theta_{\pi_\mu}(\tr h_\varphi)=\varphi\left(\sqrt{|\mu+\rho|^2-|\rho|^2
+\lambda_\tau}\right),\quad \mu\in L^\prime,\;\pi_\mu\in\widehat G(\tau).
\end{equation}
Using the Plancherel formula, (\ref{charact4}) and (\ref{charact5}), we get 
an explicit form of the trace formula (\ref{traceform2}).

Now we consider the case $\tau=1$, where $1$ denotes the trivial 
representation. Then the $h_\varphi$ belongs to the space 
$C^\infty_c(G/\hskip-2pt/K)$ of $K$-bi-invariant, smooth, compactly supported 
functions on $G$. Let $c(\lambda)$ be Harish-Cahndra's c-function. Then the
Plancherel measure for the spherical Fourier transform is given by 
$|c(\lambda)|^{-2}d\lambda$, and the Plancherel formula for spherical functions
(see \cite{He}) gives
\begin{equation}\label{spherical10}
h_\varphi(e)=\frac{1}{2}\int_\R\varphi\left(\sqrt{\lambda^2+|\rho|^2}\right)
|c(\lambda)|^{-2}\,d\lambda.
\end{equation}
Furthermore, note that by Frobenius reciprocity $\widehat M(1)$ consists only
of the trivial representation $1$ of $M$, and by (\ref{charact4}) we have
\begin{equation}\label{charact6}
\Theta_{1,\lambda}(\tr h_\varphi)=\varphi\left(\sqrt{\lambda^2+|\rho|^2}\right)
,\quad \lambda\in\R.
\end{equation}
Inserting (\ref{spherical5})  and (\ref{charact6}) into 
(\ref{traceform2}) we
get the final trace formula for the Laplacian on functions. If we replace 
$\Delta^\#_\chi$ by $\Delta^\#_\chi-|\rho|^2$, then Theorem \ref{scalar} follows.

\section{Restrictions of representations of G}\label{restrict}
\setcounter{equation}{0}

In this section we consider representations of $\Gamma$ which are 
the restriction of a finite-dimensional complex representation 
$\rep\colon G\to \GL(E)$ of $G$. For such representations there exists another
approach to the Selberg trace formula. 

Denote the flat bundle associated to $\rep|_\Gamma$ by $E_\rep$. There is a 
different description of $E_\rep$ as follows. 
Let $E_\tau=\Gamma\bs \widetilde E_\tau$ be the locally homogeneous vector 
bundle associated to the restriction $\tau$ of $\rep$ to $K$. 
Then there is a canonical isomorphism
\begin{equation}\label{iso1}
E_\rep\cong E_\tau
\end{equation}
\cite[Proposition 3.1]{MM}. Note that the space of $C^\infty$-sections of 
$E_\tau$ can be identified with the space $\left(C^\infty(\Gamma\bs G)\otimes 
E\right)^K$ of $K$-invariant vectors in $\left(C^\infty(\Gamma\bs G)\otimes 
E\right)$, where $K$ acts by $k\mapsto R(k)\otimes \rep(k)$, $k\in K$.
Thus there is a canonical isomorphism
\begin{equation}\label{iso2}
\phi\colon C^\infty(X,E_\rep)\cong\left(C^\infty(\Gamma\bs G)\otimes E\right)^K.
\end{equation}

Let $\gf=\kf\oplus\pg$ be the Cartan decomposition.
By  \cite[Lemma 3.1]{MM} there exists a Hermitian inner product $\langle\cdot,
\cdot\rangle_E$ in $E$ which satisfies the following properties.
\[
\begin{split}
&\langle \rep(Y)u,v\rangle_E=-\langle u,\rep(Y)v\rangle_E,\quad 
\mathrm{for}\; Y\in
\kf,\;u,v\in E;\\
&\langle \rep(Y)u,v\rangle_E=\langle u,\rep(Y)v\rangle_E,\quad 
\mathrm{for}\; Y\in
\pg,\;u,v\in E.
\end{split}
\]
In particular, $\langle\cdot,\cdot\rangle_E$ is $K$-invariant. Therefore,
it defines a $G$-invariant Hermitian fiber metric in $\widetilde E_\tau$
which descends to a fiber metric in $E_\tau$. By  (\ref{iso1}) it corresponds
to a fiber metric in $E_\rep$. Let $\Delta_\rep=(\nabla^\rep)^*\nabla^\rep$ be
the associated Laplacian in $C^\infty(X,E_\rep)$. It is a formally self-adjoint
operator. Its spectral decomposition can be determined as follows. 
By Kuga's lemma \cite[(6.9)]{MM} we have
\begin{equation}\label{laplace1}
\Delta_\rep=-R(\Omega)+\rep(\Omega)\Id.
\end{equation}
Assume that $\rep$ is absolutely irreducible. Then there is a scalar 
$\lambda_\rep\ge 0$ such that
\[
\rep(\Omega)=\lambda_\rep\Id.
\]
Let $R_\Gamma$ be the right regular representation of $G$ in $L^2(\Gamma\bs G)$.
Let 
\begin{equation}\label{regrep1}
L^2(\Gamma\bs G)=\widehat\bigoplus_{\pi,\in\widehat G}m_\Gamma(\pi)\H_\pi
\end{equation}
be the decomposition of $R_\Gamma$ into irreducible subrepresentations, where
$\H_\pi$ denotes the Hilbert space of the representation $\pi$. Denote
by $(\H_\pi\otimes E)^K$ the space of $K$ invariant vectors of 
$\H_\pi\otimes E$, where the action of $K$ is given by 
$k\mapsto \pi(k)\otimes \rep(k)$. By (\ref{iso1}) and (\ref{regrep1}) we get
\begin{equation}\label{regrep2}
L^2(X,E_\rep)\cong (L^2(\Gamma\bs G)\otimes E)^K\cong \widehat\bigoplus_{\pi\in
\widehat G}m_\Gamma(\pi)(\H_\pi\otimes E)^K.
\end{equation}
For $\pi\in\widehat G$ let 
\[
\lambda_\pi=\pi(\Omega)
\]
be the Casimir eigenvalue of $\pi$. Then $R(\Omega)$ acts in 
$(\H_\pi\otimes E)^K$ by $\lambda_\pi$. By (\ref{laplace1}) it follows that
w.r.t. the isomorphism (\ref{regrep2}), $\Delta_\rep$ acts in 
$(\H_\pi\otimes E)^K$ as $(-\lambda_\pi+\lambda_\rep)\Id$. Thus (\ref{regrep2})
is the eigenspace decomposition of $\Delta_\rep$. 

Let $\varphi\in \cS(\R)$ be even. By Lemma \ref{smoothing} 
$\varphi\left((\Delta_\rep)^{1/2}\right)$ is a smoothing operator. So it is
a trace class operator. It acts in $(\H_\pi\otimes E)^K$ by
$\varphi\left((-\lambda_\pi+\lambda_\rep)^{1/2}\right)$. Then it follows 
from (\ref{regrep2}) that
\begin{equation}\label{traceform8}
\Tr \varphi\left((\Delta_{\rep})^{1/2}\right)=\sum_{\pi\in\widehat G}
m_\Gamma(\pi)\dim(\H_\pi\otimes E)^K
\varphi\left((-\lambda_\pi+\lambda_\rep)^{1/2}\right).
\end{equation}
To derive the trace formula, we can proceed as in section \ref{locsym}. 
The lift $\widetilde\Delta_\rep$ of $\Delta_\rep$  to $S$ is a $G$-invariant
elliptic differential operator which is symmetric and non-negative. Let
$h_{\rep,\varphi}\colon \Gamma\bs G\to \End(E)$ be the kernel of 
$\varphi\bigl((\widetilde\Delta_\rep)^{1/2}\bigr)$. Applying Proposition 
\ref{traceform} with $\chi=1$  and (\ref{traceform8}), we get
\begin{equation}\label{traceform9}
\begin{split}
\sum_{\pi\in\widehat G}
m_\Gamma(\pi)\dim(\H_\pi\otimes E)^K&
\varphi\left((-\lambda_\pi+\lambda_\rep)^{1/2}\right)=
\vol(\Gamma\bs S)\tr h_{\rep,\varphi}(e)\\
&+\sum_{\{\gamma\}_\Gamma\not=e}\vol(\Gamma_\gamma\bs G_\gamma)
\int_{G_\gamma\bs G}\tr h_{\rep,\varphi}(g^{-1}\gamma g)\;d\dot g.
\end{split}
\end{equation}
{\bf Remark.} Let $\chi=\rep|_\Gamma$. Then we also have the trace formula
of Proposition \ref{traceform} with $\tau=1$. The two formulas are, of course,
different, since the operators are different. In the present case, the 
advantage
is that we can work with self-adjoint operators. On the other hand, the
formula (\ref{traceform1}) is more suitable for applications to Ruelle- and
Selberg zeta functions. 
\hfill$\square$

If the split rank of $G$ is 1, we can use (\ref{orbint1}) to express the 
orbital integrals in terms of characters. This gives
\begin{prop}
Assume that the split rank of $G$ is 1. Let $\rep\colon G\to \GL(E)$ be
an absolutely irreducible finite-dimensional complex representation of $G$.
Let $\varphi\in\cS(\R)$ be even with $\hat\varphi\in C^\infty_c(\R)$. Then
with the same notation as above we have
\begin{equation}\label{traceform10}
\begin{split}
\sum_{\pi\in\widehat G}
m_\Gamma(\pi)\dim(\H_\pi\otimes E)^K&
\varphi\left((-\lambda_\pi+\lambda_\rep)^{1/2}\right)=
\vol(\Gamma\bs S)\tr h_{\rep,\varphi}(e)\\
&+\sum_{\{\gamma\}_\Gamma\not=e}\frac{1}{2\pi}
\frac{l(\gamma_0)}{D(\gamma)}
\sum_{\sigma\in\widehat M}\overline{\tr\sigma(\gamma)}
\int_\R\Theta_{\sigma,\lambda}(h_{\rep,\varphi})\cdot e^{-il(\gamma)\lambda}\;
d\lambda.
\end{split}
\end{equation}
\end{prop}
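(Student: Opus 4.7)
The plan is to deduce the displayed formula directly from equation \eqref{traceform9} by converting each non-identity orbital integral into a character integral via Wallach's formula \eqref{orbint1}, together with the volume identity \eqref{volume}. The identity contribution $\vol(\Gamma\bs S)\tr h_{\rep,\varphi}(e)$ is already in final form and requires no further manipulation, so the entire work takes place on the hyperbolic side of \eqref{traceform9}.

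First I would verify that the scalar function $\tr h_{\rep,\varphi}$ lies in $C^\infty_c(G)$ and is $K$-finite, so that \eqref{orbint1} applies. Compact support and smoothness are inherited from $h_{\rep,\varphi}$, which itself comes from the wave-kernel construction of Section \ref{kernwave} applied to the operator $\widetilde\Delta_\rep$ (the kernel is supported where the geodesic distance lies in $\supp\widehat\varphi$). For $K$-finiteness, the bi-equivariance $h_{\rep,\varphi}(k_1 g k_2)=\tau(k_1)h_{\rep,\varphi}(g)\tau(k_2)$ with $\tau=\rep|_K$ gives
\[
\tr h_{\rep,\varphi}(k_1 g k_2)=\tr\bigl(\tau(k_2 k_1)\,h_{\rep,\varphi}(g)\bigr),
\]
and since the matrix coefficients of $\tau$ span a finite-dimensional space, $\tr h_{\rep,\varphi}$ is $K$-finite under both left and right translation. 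In particular, $\Theta_{\sigma,\lambda}(h_{\rep,\varphi}):=\Theta_{\sigma,\lambda}(\tr h_{\rep,\varphi})$ vanishes for all but finitely many $\sigma\in\widehat M$, so the inner sum on the right of \eqref{traceform10} is actually finite.

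With these checks in place, I would apply \eqref{orbint1} with $h=\tr h_{\rep,\varphi}$ to each non-identity conjugacy class, obtaining
\[
\int_{G_\gamma\bs G}\tr h_{\rep,\varphi}(g^{-1}\gamma g)\,d\dot g
=\frac{1}{2\pi}\frac{1}{u(\gamma)D(\gamma)}\sum_{\sigma\in\widehat M}\overline{\tr\sigma(\gamma)}\int_\R\Theta_{\sigma,\lambda}(h_{\rep,\varphi})\,e^{-il(\gamma)\lambda}\,d\lambda.
\]
Multiplying by $\vol(\Gamma_\gamma\bs G_\gamma)$ and using the rank-one volume identity \eqref{volume}, namely $\vol(\Gamma_\gamma\bs G_\gamma)/u(\gamma)=l(\gamma_0)$, the factor $u(\gamma)$ in the denominator cancels and produces $l(\gamma_0)$ in the numerator. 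Substituting the resulting expression into the hyperbolic side of \eqref{traceform9} yields exactly \eqref{traceform10}.

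The main potential obstacle is really just the $K$-finiteness verification justifying the use of \eqref{orbint1}; once that is granted, the rest is formal substitution and bookkeeping of volume factors. A minor additional subtlety is that $h_{\rep,\varphi}$ is matrix-valued, whereas \eqref{orbint1} is stated for scalar functions, but this is resolved by taking the pointwise trace before integrating, which is legitimate since the orbital integral and the character both commute with $\tr$ applied to an $\End(E)$-valued kernel.
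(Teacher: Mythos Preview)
Your proposal is correct and follows exactly the paper's approach: the paper derives this proposition from \eqref{traceform9} by inserting Wallach's formula \eqref{orbint1} and the volume identity \eqref{volume}, just as in the passage from \eqref{traceform1} to \eqref{traceform2}. Your explicit verification of $K$-finiteness of $\tr h_{\rep,\varphi}$ is a detail the paper leaves implicit but does not constitute a different route.
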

The characters $\Theta_{\sigma,\lambda}(h_{\rep,\varphi})$ can be computed by the
method explained in section \ref{rank1}.

So there are two classes of finite-dimensional representations of $\Gamma$ 
for which we can work with self-adjoint operators and apply
 the usual Selberg trace formula. These are unitary representations and 
restrictions of rational representations of $G$. In general, not every 
representation of $\Gamma$ belongs to one of these classes. However, if 
$\rk(G)\ge 2$, the superrigidity theorem of Margulis 
\cite[Chapt. VII, \S 5]{Ma} implies that a general representation of $\Gamma$
is not to far from a representation which is either unitary or the restriction
of a rational representation. See  \cite[p. 245]{BW} for more details.

\end{document}